\documentclass[12 pt]{article}
\usepackage{graphicx}
\usepackage{amsmath}
\usepackage{authblk}
\usepackage{amsthm}
\usepackage{amssymb}
\usepackage{xcolor}
\usepackage{tikz}
\usetikzlibrary{positioning}
\usepackage{subcaption}
\usetikzlibrary{graphs}
\usepackage{algorithm}
\usepackage[noend]{algpseudocode}
\usepackage{caption}
\usepackage[colorlinks=true, linkcolor=black, citecolor=black, urlcolor=black]{hyperref}
\usepackage[margin=1in]{geometry}
\usepackage{lineno}


\newtheorem{theorem}{Theorem}[section]

\newtheorem{proposition}[theorem]{Proposition}

\theoremstyle{definition}

\renewcommand\thanks[1]{\protect\footnotetext[0]{#1}}

\theoremstyle{remark}

\begin{document}
\title{A Complete Characterization of all Magic Constants Arising from Distance Magic Graphs}

\author[1]{Ravindra Pawar\thanks{p20200020@goa.bits-pilani.ac.in }}
\author[2]{Tarkeshwar Singh\thanks{tksingh@goa.bits-pilani.ac.in}}
\author[3]{Himadri Mukherjee\thanks{himadrim@goa.bits-pilani.ac.in}}
\author[4]{Jay Bagga\thanks{jbagga@bsu.edu}}
\affil[1,2,3]{Department of Mathematics BITS Pilani KK Birla Goa Campus, Goa, India}
\affil[4]{Department of Computer Science, Ball State University, Indiana, USA.}

\date{}
\maketitle
\noindent
\begin{abstract}

A positive integer $k$ is called a magic constant if there is a graph $G$ along with a bijective function $f$ from $V(G)$ to first $|V(G)|$ natural numbers such that the weight of the vertex $w(v) = \sum_{uv \in E}f(v) =k$  for all $v \in V$. It is known that all odd positive integers greater equal $3$ and the integer powers of $2$, $2^{t}$, $t \ge 6$ are magic constants. In this paper we characterise all positive integers which are magic constants.  
\end{abstract}
\noindent {\bf Keywords:} Magic constant, Distance magic graph, Algorithm. \\
\textbf{AMS Subject Classification 2021: 05C 78.}
\section{Introduction}
Throughout this article, we will assume that $G = (V,E)$ denotes the finite simple graph with $V$ denoting the set of vertices and $E$ the set of edges. For a vertex, $u$, its neighborhood is given by $N(u) = \{v \in V : uv \in E\}$. A positive integer $k$ is called a \textit{magic constant} if there is a graph $G$ along with a bijective function $f: V(G) \rightarrow \{1,2, \dots, |V(G)|\}$ such that the weight of the vertex $w(v) = \sum_{uv \in E}f(v)$,  remains constant and equal to $k$ for all $v \in V$. Such labeling $f$ is called \textit{distance magic labeling}. A graph equipped with distance magic labeling is known as a \textit{ distance magic graph} (for more details, see \cite{dm_survey_arumugam}, \cite{dm_survey_gallian}).  We refer to West \cite{west} for graph-theoretic terminology and notation not covered here.

\smallskip 
Regardless of how the vertices are labeled, the magic constant remains invariant, as previously established  \cite{uniquek, o_uniquek} i.e., it is independent of the distance magic labeling. At the International Workshop on Graph Labeling (IWOGL-2010), Arumugam \cite{dm_survey_arumugam} raised the question to {\it characterise the set of positive integers, which are magic constants}. Since then, this problem has captured the attention of researchers. The integers $1,2,4,6,8,12$ do not  belong to the set of magic constants. On the other hand, all odd integers $\ge 3$  and all integers of the form $2^t~(t \ge 6)$  are confirmed members of the set of magic constants (see, \cite{kamatchit}, \cite{dm_4reg_petr}). A previous study in \cite{dm_algo_fuad} identified a graph with $8$ vertices that possesses a magic constant of $24$. However, characterising the set of integers, which are magic constants, is an ongoing problem in distance magic graphs. We have shown that positive integers of the form $4t + 2$ for $t \ge 3$ and of the form $4t+4$ for $t \ge 8$ are magic constants. Hence, the remaining numbers are $16, 20, 28$, and $32$.

\smallskip 
Bertault et al. \cite{algo_bertault} introduced a heuristic algorithm to identify various classes of labelings. Building on this, Fuad et al. \cite{dm_algo_fuad} refined the algorithm, achieving the construction of all distance magic graphs up to isomorphism for orders up to $9$. This is the well-known algorithm available for constructing distance magic graphs. However, from a computational point of view, this algorithm has limitations. It relies on a collection of all non-isomorphic graphs as input, a formidable task that poses challenges, particularly for higher-order graphs. Generating all non-isomorphic graphs of substantial sizes remains computationally demanding, thereby affecting the practicality of the algorithm.

\smallskip 
In this paper, we completely solve the characterization problem of magic constants. We prove this result by providing constructions of distance magic graphs to obtain specific magic constants.

\smallskip 
We have devised an algorithm to check if there is a distance magic graph for a given number of vertices $n$ and a specific magic constant $k$. We have implemented this algorithm to construct magic graphs for $k = 20, 28$, and $32$. Our search for $k = 16$ shows that no graph admits the magic constant $16$. Using this algorithm, we have generated all distance magic graphs up to isomorphism of order up to $12$. This enhances the existing collection of distance magic graphs \cite{dm_algo_fuad}.

\smallskip
This algorithm serves as a practical tool for researchers and enthusiasts in the field of distance magic graphs. Addressing the general challenge of characterizing all graphs possessing distance magic labelings using algorithms presents a computational hurdle. This is mainly due to the impracticality of generating all non-isomorphic graphs since this becomes increasingly unfeasible as the graph order grows. Consequently, the algorithms devised for characterisation problems often encounter limitations or become computationally intensive when dealing with graphs of larger vertices.
 
 \smallskip
 It is believed that for a given $n$, only a small subset of the set of all graphs of order $n$ possess the distance magic property. Consequently, a different approach is required rather than providing a collection of non-isomorphic graphs as input and subsequently characterizing them for the distance magic property. Instead, the focus shifts towards developing algorithms capable of dynamically constructing all distance magic graphs on $n$ vertices. By adopting this strategy, we can bypass the challenge of generating all non-isomorphic graphs. We have successfully implemented this approach.

\section{Main Results}

The magic constant of regular distance magic graphs can be easily calculated, as shown in the following theorem.
\begin{theorem} \cite{sigma_jinnah, dm_miller, rtv, vilfredt} \label{regk}
If $G$ is an $r$-regular distance magic graph on $n$ vertices with magic constant $k$, then $k = \frac{r(n+1)}{2}$.
\end{theorem}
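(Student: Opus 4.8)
The plan is to prove the identity by a double-counting argument applied to the total weight of the graph. First I would sum the vertex weights over all of $V$. Since $G$ is distance magic with constant $k$, every vertex satisfies $w(v) = k$, and therefore $\sum_{v \in V} w(v) = nk$. The whole proof reduces to evaluating this same sum in a second, independent way and equating the two expressions.

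Next I would expand each weight using the definition $w(v) = \sum_{u \in N(v)} f(u)$ and interchange the order of summation. In the double sum $\sum_{v \in V}\sum_{u \in N(v)} f(u)$, a given label $f(u)$ is counted once for each vertex $v$ that is adjacent to $u$, that is, exactly $\deg(u)$ times. Invoking the hypothesis that $G$ is $r$-regular, we have $\deg(u) = r$ for every vertex $u$, so the total collapses to $r \sum_{u \in V} f(u)$.

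Finally I would use that $f$ is a bijection onto $\{1, 2, \dots, n\}$, so by the arithmetic-series formula $\sum_{u \in V} f(u) = \frac{n(n+1)}{2}$. Equating the two evaluations of the total weight gives $nk = r \cdot \frac{n(n+1)}{2}$, and dividing through by $n$ yields the desired formula $k = \frac{r(n+1)}{2}$.

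There is no substantive obstacle in this argument; it is a short counting computation. The only step that requires genuine care is the interchange of summation, where one must correctly recognize that each label $f(u)$ appears with multiplicity equal to $\deg(u)$ rather than once. The role of the regularity hypothesis is precisely to make all these multiplicities equal to the common value $r$, so that the factor pulls out of the sum and produces the clean closed-form expression.
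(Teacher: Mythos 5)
Your proof is correct: the double-counting identity $\sum_{v \in V}\deg(v)f(v) = nk$, specialized to an $r$-regular graph, gives $nk = r\cdot\frac{n(n+1)}{2}$ exactly as you argue, and the interchange-of-summation step is handled properly. Note that the paper itself offers no proof of this theorem (it is quoted from the cited references), but your argument is the standard one and rests on precisely the identity $\sum_{v \in V}\deg(v)f(v) = nk$ that the paper later invokes, in its general non-regular form, in Section 3.4 when bounding magic constants.
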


It is known that each positive integer of the form $4t+1$ is a magic constant of a graph union of $t$ copies of a $4$-cycle $tC_4$ as stated in the following theorem.
\begin{theorem} \cite{sigma_jinnah} \label{odd1}
A graph $G$ of order $n$ is a distance magic graph with magic constant $k = n+1$ if and only if $G = tC_4$, $t \geq 1$.
\end{theorem}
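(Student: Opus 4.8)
For the forward (``if'') direction, the plan is to exhibit an explicit distance magic labeling of $tC_4$ that realizes $k = 4t+1 = n+1$. The key observation is that $\{1,2,\dots,4t\}$ splits into $2t$ complementary pairs $\{i,\,4t+1-i\}$, each summing to $4t+1$. I would distribute these $2t$ pairs arbitrarily into $t$ groups of two, assign one group to each copy of $C_4$, and within a copy place the two numbers of each pair on opposite (non-adjacent) vertices. Since the two neighbors of any vertex of a $C_4$ then carry exactly one complementary pair, a direct check gives $w(v) = 4t+1 = n+1$ for every $v$, so $G = tC_4$ is distance magic with the required constant.

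For the converse (``only if''), suppose $G$ of order $n$ admits a distance magic labeling $f$ with $w(v) = n+1$ for all $v$. First I would pin down the degrees: an isolated vertex would have weight $0$, and a vertex of degree $1$ with neighbor $u$ would have weight $f(u)\le n < n+1$, both contradicting the magic condition, so $\deg(v)\ge 2$ for every $v$. Next comes the crucial counting identity. Summing all weights and noting that each label $f(u)$ is counted once for every vertex adjacent to $u$, i.e.\ $\deg(u)$ times, gives $\sum_{v} w(v) = \sum_{v} f(v)\deg(v)$; the left side equals $n(n+1)$, while $\sum_v f(v) = n(n+1)/2$. Subtracting twice the latter yields
\[
\sum_{v} f(v)\bigl(\deg(v)-2\bigr) = 0 .
\]
Because each factor $f(v) > 0$ and each $\deg(v)-2 \ge 0$, every summand is nonnegative, so the sum vanishes only if $\deg(v)=2$ for all $v$. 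Hence $G$ is $2$-regular, a disjoint union of cycles; as a sanity check this is consistent with Theorem~\ref{regk}, which for $r=2$ returns exactly $k=n+1$.

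It then remains to show each cycle has length $4$. Labeling the vertices of a cycle component cyclically as $x_0,x_1,\dots,x_{\ell-1}$, the magic condition at $x_i$ reads $f(x_{i-1}) + f(x_{i+1}) = n+1$. Comparing with the condition at $x_{i+2}$ gives $f(x_{i-1}) = f(x_{i+3})$, and injectivity of $f$ forces $x_{i-1}=x_{i+3}$, i.e.\ $\ell \mid 4$. Since every cycle in a simple graph has length at least $3$, each component is a $C_4$ and therefore $G = tC_4$. I expect the converse to be the main obstacle: the non-obvious point is excluding irregular graphs \emph{without} assuming regularity beforehand, and the counting identity above is the device that accomplishes this in a single step. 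Once $2$-regularity is in hand, the cycle-length analysis is routine.
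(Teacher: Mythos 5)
Your proof is correct, but there is nothing in the paper to compare it against: Theorem \ref{odd1} is stated with a citation to \cite{sigma_jinnah} and the paper gives no proof of it, so your argument serves as a self-contained replacement rather than a variant of an in-paper proof. All three ingredients check out. The forward direction, placing the complementary pairs $\{i,\,4t+1-i\}$ on opposite vertices of each $C_4$, is exactly the labeling pattern visible in the paper's Figure \ref{fig:tc4}. For the converse, your exclusion of degrees $0$ and $1$ is sound, and your key identity $\sum_{v} \deg(v) f(v) = nk$ is precisely the relation the paper itself records (with citation) in Section 3.4; combining it with $\sum_v f(v) = n(n+1)/2$ to get $\sum_v f(v)\left(\deg(v)-2\right) = 0$ and reading off $2$-regularity from nonnegativity is the crux, and it is done correctly --- note this derives $2$-regularity rather than assuming it, which is stronger than merely checking consistency with Theorem \ref{regk}. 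The final cycle-length step is also fine: from $f(x_{i-1}) = f(x_{i+3})$ and injectivity you get $x_{i-1} = x_{i+3}$, i.e.\ $i-1 \equiv i+3 \pmod{\ell}$, so $\ell \mid 4$, and $\ell \ge 3$ forces $\ell = 4$ (it would be worth saying explicitly that indices are taken modulo $\ell$). The only substantive gap to flag is bookkeeping, not mathematics: in the forward direction you should note that the union of the $t$ groups of pairs uses each label in $\{1,\dots,4t\}$ exactly once, so the resulting labeling is indeed a bijection onto $\{1,\dots,|V|\}$; this is immediate from the construction but is the one sentence that makes the ``if'' direction airtight.
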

Figure \ref{fig:tc4} shows the distance magic labeling of $3C_4$ with the magic constant $13$.
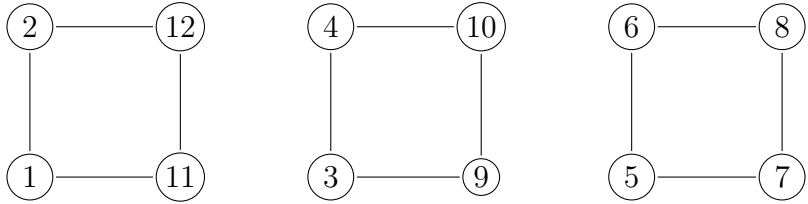
\begin{figure}[ht]
    \centering
    \begin{tikzpicture}
    \node[draw, circle, inner sep=2.7pt, minimum size=2pt] (a) at (0,0) {$1$};
    \node[draw, circle, inner sep=2.7pt, minimum size=2pt] (b) at (0,2) {$2$};
    \node[draw, circle, inner sep=1.5pt, minimum size=2pt] (c) at (2,2) {$12$};
    \node[draw, circle, inner sep=1.5pt, minimum size=2pt] (d) at (2,0) {$11$};
    \node[draw, circle, inner sep=2.7pt, minimum size=2pt] (e) at (4,0) {$3$};
    \node[draw, circle, inner sep=2.7pt, minimum size=2pt] (f) at (4,2) {$4$};
    \node[draw, circle, inner sep=1.5pt, minimum size=2pt] (g) at (6,2) {$10$};
    \node[draw, circle, inner sep=1.5pt, minimum size=2pt] (h) at (6,0) {$9$};
    \node[draw, circle, inner sep=2.7pt, minimum size=2pt] (i) at (8,0) {$5$};
    \node[draw, circle, inner sep=2.7pt, minimum size=2pt] (j) at (8,2) {$6$};
    \node[draw, circle, inner sep=2.7pt, minimum size=2pt] (k) at (10,2) {$8$};
    \node[draw, circle, inner sep=2.7pt, minimum size=2pt] (l) at (10,0) {$7$};
    
    \draw[shorten <= 1pt, shorten >= 1pt] (a) -- (b);
    \draw[shorten <= 1pt, shorten >= 1pt] (b) -- (c);
    \draw[shorten <= 1pt, shorten >= 1pt] (c) -- (d);
    \draw[shorten <= 1pt, shorten >= 1pt] (d) -- (a);
    \draw[shorten <= 1pt, shorten >= 1pt] (e) -- (f);
    \draw[shorten <= 1pt, shorten >= 1pt] (f) -- (g);
    \draw[shorten <= 1pt, shorten >= 1pt] (g) -- (h);
    \draw[shorten <= 1pt, shorten >= 1pt] (h) -- (e);
    \draw[shorten <= 1pt, shorten >= 1pt] (i) -- (j);
    \draw[shorten <= 1pt, shorten >= 1pt] (j) -- (k);
    \draw[shorten <= 1pt, shorten >= 1pt] (k) -- (l);
    \draw[shorten <= 1pt, shorten >= 1pt] (l) -- (i);
    \end{tikzpicture}
    \caption{ A distance magic labeling of $3C_4$.}
    \label{fig:tc4}
\end{figure}

The following result shows that each positive integer of the form $4t+3$ is a magic constant of a graph $P_3 \cup tC_4$. 
\begin{theorem} \cite{sigma_jinnah} \label{odd2}
Let $G$ be a distance magic graph with magic constant $k$. Then, the following are equivalent.
\begin{enumerate}
    \item $k=n$.
    \item $\delta(G) = 1$.
    \item Either $G$ is isomorphic to $P_3$ or $G$ contains exactly one copy of $P_3$ and all other components are isomorphic to $C_4$.
\end{enumerate}
\end{theorem}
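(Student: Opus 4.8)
The plan is to prove the two equivalences $(1) \Leftrightarrow (2)$ and $(2) \Leftrightarrow (3)$, where $(3) \Rightarrow (2)$ is immediate and essentially all the work sits in $(2) \Rightarrow (3)$. Throughout I write $n = |V(G)|$, so $f$ is a bijection onto $\{1,\dots,n\}$ and $w(v) = k$ for every $v$. I would first settle $(1) \Leftrightarrow (2)$. For $(2) \Rightarrow (1)$, take $v$ with $\deg(v) = 1$ and let $u$ be its unique neighbour; then $k = w(v) = f(u) \le n$. Since the vertex $z$ with $f(z) = n$ cannot be isolated (an isolated vertex forces $k = 0$, contradicting $k = f(u) \ge 1$), it has a neighbour $y$, so $k = w(y) \ge f(z) = n$; the two inequalities give $k = n$. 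For $(1) \Rightarrow (2)$, let $z$ carry $f(z) = n = k$; for any neighbour $v$ of $z$, the identity $w(v) = k = n$ with summand $f(z) = n$ forces every other neighbour of $v$ to contribute $0$, so $\deg(v) = 1$ and $\delta(G) = 1$.

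Next, $(3) \Rightarrow (2)$ holds because a copy of $P_3$ always supplies a vertex of degree $1$. The substantive step is $(2) \Rightarrow (3)$, where I may also assume $k = n$ by the equivalence just established. The neighbour $u$ of a degree-$1$ vertex satisfies $f(u) = k = n$, so $u$ is the unique vertex with the largest label; hence every degree-$1$ vertex is adjacent to $u$. Applying the $(1)\Rightarrow(2)$ argument to $u$ (which carries the label $n$) shows conversely that every neighbour of $u$ has degree $1$. Therefore the component of $u$ is a star $K_{1,m}$ with $m = \deg(u)$, whose $m$ leaves carry labels summing to $w(u) = n$; since two distinct leaves cannot both be labelled $n$, we get $m \ge 2$.

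To pin down $m$ and the remaining components I would use the weighted handshake identity $\sum_{v} f(v)\deg(v) = nk$, obtained by double counting $\sum_v w(v)$. Subtracting $2\sum_v f(v) = n(n+1)$ and using $k = n$ turns this into $\sum_v f(v)\bigl(\deg(v) - 2\bigr) = -n$. Isolating the contribution $n(m-2)$ of $u$, the contribution $-n$ of the leaves (their labels sum to $n$), and noting that every other vertex has degree at least $2$ (all leaves are already accounted for at $u$, and no vertex is isolated), I obtain $n(m-2) + \sum_{\mathrm{rest}} f(v)\bigl(\deg(v)-2\bigr) = 0$ with a nonnegative second sum. This forces $m \le 2$, hence $m = 2$, so the component of $u$ is exactly $P_3$; and then the residual sum must vanish, which (each term being nonnegative and $f(v) > 0$) forces $\deg(v) = 2$ for every remaining vertex. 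The other components are thus disjoint cycles, and along any cycle $C_p$ the relations $w(v_{i-1}) = w(v_{i+1}) = n$ give $f(v_{i+2}) = n - f(v_i)$ and hence $f(v_{i+4}) = f(v_i)$; injectivity of $f$ makes every orbit of $i \mapsto i+4 \pmod p$ a singleton, i.e.\ $p \mid 4$, and $p \ge 3$ yields $p = 4$. So $G \cong P_3$ or $G$ is a single $P_3$ together with copies of $C_4$, which is exactly $(3)$.

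The main obstacle will be the structural implication $(2) \Rightarrow (3)$: squeezing out of the single scalar condition $w \equiv k$ both the precise degree of $u$ and the $2$-regularity of everything else. The identity $\sum_v f(v)\bigl(\deg(v)-2\bigr) = -n$ is the decisive device, since it converts the minimum-degree hypothesis into a sign-definite sum whose forced vanishing fixes every degree at once; the closing observation that a $2$-regular distance magic component with $k = n$ must be $C_4$ then drops out of the alternating relation around the cycle.
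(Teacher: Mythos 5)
Your proof is correct, but note that the paper itself never proves this statement: Theorem \ref{odd2} is quoted from \cite{sigma_jinnah}, so there is no in-paper argument to compare against, and what you have written is a self-contained independent proof. All the steps check out: the easy equivalence $(1) \Leftrightarrow (2)$ (degree-one vertex gives $k \le n$, the vertex labelled $n$ gives $k \ge n$, and conversely the label-$n$ vertex forces its neighbours to have degree one); the observation that all degree-one vertices attach to the unique vertex $u$ with $f(u) = n$, making $u$'s component a star $K_{1,m}$ with $m \ge 2$; the weighted handshake identity $\sum_v f(v)\deg(v) = nk$ (which the paper itself records, citing \cite{vilfredt}) rewritten as $\sum_v f(v)(\deg(v)-2) = -n$, whose sign analysis simultaneously forces $m = 2$ and $2$-regularity of everything outside the star; and the period-four relation $f(v_{i+4}) = f(v_i)$ around each remaining cycle, which with injectivity of $f$ forces every cycle to be $C_4$. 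The one point worth making explicit in the star step is that the rooted structure is genuinely a component, i.e.\ that $u$'s neighbours have no edges among themselves or to the outside; this follows immediately from their having degree one, which you do establish, so it is a matter of exposition rather than a gap. Your counting argument is arguably tighter than a case analysis: the single identity disposes of both the degree of $u$ and the structure of the remaining components at once.
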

Figure \ref{fig:p3tc4} shows the distance magic labeling of $P_3 \cup 2C_4$ with magic constant $11$.
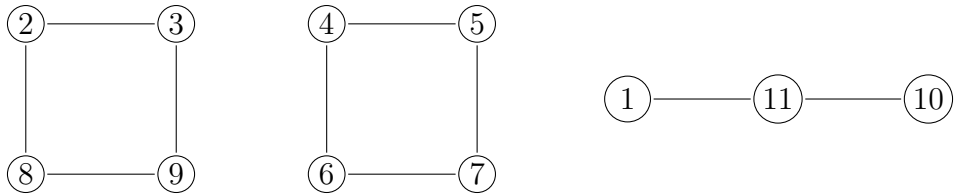
\begin{figure}[ht]
    \centering
    \begin{tikzpicture}
    \node[draw, circle, inner sep=1.5pt, minimum size=2pt] (a) at (0,0) {$8$};
    \node[draw, circle, inner sep=1.5pt, minimum size=2pt] (b) at (0,2) {$2$};
    \node[draw, circle, inner sep=1.5pt, minimum size=2pt] (c) at (2,2) {$3$};
    \node[draw, circle, inner sep=1.5pt, minimum size=2pt] (d) at (2,0) {$9$};
    \node[draw, circle, inner sep=1.5pt, minimum size=2pt] (e) at (4,0) {$6$};
    \node[draw, circle, inner sep=1.5pt, minimum size=2pt] (f) at (4,2) {$4$};
    \node[draw, circle, inner sep=1.5pt, minimum size=2pt] (g) at (6,2) {$5$};
    \node[draw, circle, inner sep=1.5pt, minimum size=2pt] (h) at (6,0) {$7$};
    \node[draw, circle, inner sep=2.7pt, minimum size=2pt] (i) at (8,1) {$1$};
    \node[draw, circle, inner sep=1.5pt, minimum size=1.5pt] (j) at (10,1) {$11$};
    \node[draw, circle, inner sep=1.5pt, minimum size=2pt] (k) at (12,1) {$10$};
    \draw[shorten <= 1pt, shorten >= 1pt] (a) -- (b);
    \draw[shorten <= 1pt, shorten >= 1pt] (b) -- (c);
    \draw[shorten <= 1pt, shorten >= 1pt] (c) -- (d);
    \draw[shorten <= 1pt, shorten >= 1pt] (d) -- (a);
    \draw[shorten <= 1pt, shorten >= 1pt] (e) -- (f);
    \draw[shorten <= 1pt, shorten >= 1pt] (f) -- (g);
    \draw[shorten <= 1pt, shorten >= 1pt] (g) -- (h);
    \draw[shorten <= 1pt, shorten >= 1pt] (h) -- (e);
    \draw[shorten <= 1pt, shorten >= 1pt] (i) -- (j);
    \draw[shorten <= 1pt, shorten >= 1pt] (j) -- (k);
    \end{tikzpicture}
    \caption{A distance magic labeling of $P_3 \cup 2C_4$.}
    \label{fig:p3tc4}
\end{figure}

From Theorem \ref{odd1} and \ref{odd2}, we conclude that all odd positive integers $\ge 3$ are magic constants. This also answers the question: does $p^{t}, t\ge 1$ belong to the set of magic constants, where $p$ is odd prime? (see \cite{kamatchit}). As a special case of odd magic constants, we have found a direct construction for graphs with magic constant  $p^{t}$, where $p$ is an odd prime. We describe that next.
\begin{proposition}
Given an odd prime $p$, $p^t$ is a magic constant for all $t \ge 1$. 
\end{proposition}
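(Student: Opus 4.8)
The plan is to reduce the statement to the two building-block results already recorded, namely Theorem~\ref{odd1} (the graph $tC_4$ realises every magic constant $\equiv 1 \pmod 4$) and Theorem~\ref{odd2} (the graph $P_3 \cup tC_4$ realises every magic constant $\equiv 3 \pmod 4$), and then to exhibit the explicit graph in each case. Since $p$ is an odd prime and $t \ge 1$, the integer $p^t$ is odd and satisfies $p^t \ge 3$; hence $p^t$ is congruent to either $1$ or $3$ modulo $4$, and the entire argument splits along this congruence.

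First I would treat the case $p^t \equiv 1 \pmod 4$. Writing $p^t = 4s+1$ with $s = (p^t-1)/4 \ge 1$, I would take $G = sC_4$. This graph has $n = 4s = p^t-1$ vertices, and by Theorem~\ref{odd1} it is distance magic with magic constant $n+1 = p^t$. The labelling is the one indicated by Figure~\ref{fig:tc4}, distributing the labels on each $C_4$ so that opposite vertices on the cycle contribute the required constant weight.

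Next I would treat the case $p^t \equiv 3 \pmod 4$. Writing $p^t = 4s+3$ with $s = (p^t-3)/4 \ge 0$, I would take $G = P_3 \cup sC_4$, which has $n = 4s+3 = p^t$ vertices. By Theorem~\ref{odd2} (using the equivalence of $k=n$ with the structural description in its third clause), this graph is distance magic with magic constant $n = p^t$. The degenerate sub-case $s=0$ forces $p^t = 3$, i.e.\ $p=3$ and $t=1$, where $G = P_3$ already suffices.

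Since every admissible $p^t$ falls into exactly one of the two congruence classes, the two constructions together cover all odd prime powers. I do not expect a genuine obstacle: the content is entirely a matter of matching $p^t \bmod 4$ to the correct building block, so the only points needing care are the boundary checks — confirming that $s \ge 1$ whenever $p^t \equiv 1 \pmod 4$ (the smallest such value being $5$) and separately disposing of the lone small value $p^t = 3$ in the second case.
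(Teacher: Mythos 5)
Your proof is correct, and it is a cleaner argument than the one in the paper for part of the statement. You split purely on the residue of $p^t$ modulo $4$ and use only Theorems~\ref{odd1} and \ref{odd2}: the graph $sC_4$ when $p^t \equiv 1 \pmod 4$ and the graph $P_3 \cup sC_4$ when $p^t \equiv 3 \pmod 4$. The paper instead splits on the residue of $p$ and the parity of $t$, and its cases coincide with yours except in one place: when $p \equiv 3 \pmod 4$ and $t \ge 3$ is odd, the paper abandons the $P_3 \cup sC_4$ construction and instead takes a $p$-regular graph $H$ on $n_1 = \frac{p^{t-1}-1}{2}$ vertices, forms the lexicographic product $H[\overline{K}_2]$, and invokes Theorems~\ref{compo} and \ref{regk} to get a $2p$-regular distance magic graph with constant $\frac{2p(p^{t-1})}{2} = p^t$. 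What your route buys is uniformity and economy: you need no product construction, and you avoid an existence question that the paper glosses over, namely that a $p$-regular graph on $n_1$ vertices actually exists (this requires $n_1 \ge p+1$ and $p\,n_1$ even, both of which hold here but are never checked in the paper). What the paper's route buys is a genuinely different realization of the same constant --- a regular graph arising from a product, in line with the authors' stated aim of giving a ``direct construction'' beyond the generic odd-constant argument --- whereas your graphs are always disjoint unions of $4$-cycles possibly together with one $P_3$. Your boundary checks ($s \ge 1$ in the first case since the smallest relevant value is $5$, and the degenerate case $p^t = 3$ handled by $P_3$ alone) are exactly the right points to verify.
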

\begin{proof}
Let $p$ be an odd prime. Then $p$ is of one of the following forms.\\

\textbf{Case i.} $p \equiv 1(\bmod\ 4)$.\\
Then $p^t \equiv 1(\bmod\ 4)$, for all $t \ge 1$ and magic constant $p^t$ can be constructed by using $\frac{p^t - 1}{4}$ copies of $C_4$ as given in Theorem~\ref{odd1}.\\

\textbf{Case ii.} $p \equiv -1(\bmod\ 4)$.\\
Then $p^t \equiv 1(\bmod\ 4)$ when $t$ is even and $p^t \equiv -1(\bmod\ 4)$ when $t$ is odd. When $t$ is even, we follow the argument as in \textit{case i} above. Suppose $t$ is odd. If $t=1$, we take $G = P_3 \cup \lambda C_4$, where $\lambda = \frac{p-3}{4}$. Hence, by the Theorem \ref{odd2}, $k = p$.\\

Now, suppose $t \ge 3$. Consider a $p$-regular graph $G$ on $n_1 = \frac{p^{t-1}-1}{2}$ vertices. Then $G[\overline{K}_{2}]$ is a $2p$-regular graph on $n = 2n_1 = p^{t-1}-1$ vertices. Therefore by Theorem \ref{regk}, $k = \frac{2p((p^{t-1}-1)+1)}{2} = p^t$ as required.
\end{proof}

Now, we explore the case of the even positive integers, which are magic constant.\\
\smallskip
When $H$ is an arbitrary graph with vertices $x_1,x_2 \dots, x_n$, and $G$ is any graph with $t$ vertices, then by $H[G]$ we denote the graph, which arises from $H$ by replacing each vertex $x_i$ by a copy of the graph $G$ with vertex set $X_i = \{x_{i_1}, \dots, x_{i_t}\}$, and each edge $x_ix_j$ by the edges of the complete bipartite graph $K_{t,t}$ with bipartition $X_i,X_j$. The graph $H[G]$ is then called \textit{lexicographic product} or the \textit{composition} of $H$ and $G$.
\begin{theorem} \cite{dm_aloysius, dm_miller} \label{compo}
If $G$ is an $r$-regular graph, then $G[\overline{K}_t]$ is distance magic for any even $t$.
\end{theorem}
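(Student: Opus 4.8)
The plan is to exploit the structural feature of the lexicographic product: in $G[\overline{K}_t]$ the weight of a vertex depends only on the \emph{block sums} of the labeling, not on the arrangement of labels within blocks. First I would fix notation. Let $G$ be $r$-regular on $m$ vertices $x_1,\dots,x_m$, so that $G[\overline{K}_t]$ has $n = mt$ vertices, and write $X_i = \{x_{i_1},\dots,x_{i_t}\}$ for the independent set replacing $x_i$. For a labeling $f : V \to \{1,\dots,mt\}$ put $S_i = \sum_{s=1}^{t} f(x_{i_s})$. Because each edge $x_i x_j$ of $G$ is blown up into the complete bipartite graph between $X_i$ and $X_j$, while $\overline{K}_t$ contributes no internal edges, the neighborhood of any vertex $x_{i_s}$ is exactly $\bigcup_{j \sim i} X_j$. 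Hence $w(x_{i_s}) = \sum_{j \sim i} S_j$, which is independent of the index $s$.

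The key reduction is then as follows: if the labeling can be arranged so that all block sums are equal, say $S_i = S$ for every $i$, then for every vertex $w(x_{i_s}) = \sum_{j \sim i} S = rS$ by $r$-regularity, which is constant, yielding a distance magic labeling. Thus the problem collapses to a purely numerical one: partition $\{1,2,\dots,mt\}$ into $m$ blocks of size $t$, each having a common sum. Summing all labels forces this common value to be $S = \frac{t(mt+1)}{2}$, and this is precisely where the hypothesis enters, since $S$ is an integer exactly when $t$ is even.

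To realize the equal partition I would use the complementary pairing $\{s,\, mt+1-s\}$, each pair summing to $mt+1$; there are $mt/2$ such pairs covering $\{1,\dots,mt\}$. Since $t$ is even, $t/2$ is a positive integer and $m \cdot (t/2) = mt/2$, so I can distribute exactly $t/2$ pairs into each block $X_i$. Each block then receives $t$ distinct labels summing to $\frac{t}{2}(mt+1) = S$, and every label is used once, so $f$ is a bijection. This produces a labeling with constant weight $k = rS = \frac{rt(mt+1)}{2}$, in agreement with Theorem~\ref{regk} applied to the $rt$-regular graph $G[\overline{K}_t]$.

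The step I expect to require the most care is the reduction to block sums, namely verifying cleanly that the neighborhood of $x_{i_s}$ is exactly $\bigcup_{j \sim i} X_j$ and that the weight therefore ignores the within-block arrangement. Once this observation is in place, the construction via complementary pairs is routine, and the evenness of $t$ is revealed to be exactly the divisibility condition that lets the pairs partition evenly among the $m$ blocks.
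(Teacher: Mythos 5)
Your proof is correct. Note first that the paper does not prove this statement at all: Theorem~\ref{compo} is quoted from the cited references, so there is no internal proof to compare against. Your argument — reduce the vertex weights to block sums via $N(x_{i_s}) = \bigcup_{j \sim i} X_j$, observe that equal block sums $S$ force constant weight $rS$ by $r$-regularity, and then achieve equal block sums by distributing $t/2$ complementary pairs $\{s,\,mt+1-s\}$ into each block — is exactly the standard construction used in the cited literature, and every step checks out, including the resulting magic constant $k = \frac{rt(mt+1)}{2}$, which is consistent with Theorem~\ref{regk}. One small inaccuracy worth flagging: your claim that $S = \frac{t(mt+1)}{2}$ ``is an integer exactly when $t$ is even'' is false as a biconditional (take $m$ and $t$ both odd, e.g.\ $m=t=3$ gives $S=15$); the hypothesis of evenness is not an integrality obstruction but, as you correctly say later, the divisibility condition that lets the $mt/2$ pairs split evenly as $t/2$ pairs per block. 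This does not affect the validity of the proof, since the theorem only asserts that even $t$ is sufficient.
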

In \cite{dm_miller}, the authors proved that for an $r$-regular graph $G$ on $n$ vertices, the magic constant of $G[\overline{K}_t]$ is $k = rt(2nt + 1)$. With $r = 2$ and $k = 1$, we obtain $k = 4n+2$. This gives the proof of the following theorem.

\begin{theorem} \label{even1}
For all $t \ge 3$, $4t+2$ is a magic constant.
\end{theorem}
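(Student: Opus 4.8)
The plan is to combine the two regular-graph facts already on the table: Theorem~\ref{compo}, which guarantees that $G[\overline{K}_t]$ is distance magic whenever $G$ is regular and $t$ is even, and Theorem~\ref{regk}, which computes the magic constant of any regular distance magic graph from its degree and order. Since the target constant $4t+2$ is linear in $t$, the natural move is to take the smallest even value $t=2$ in the composition and a $2$-regular base graph, so that the resulting degree and order feed through Theorem~\ref{regk} to give exactly a value congruent to $2 \pmod 4$.

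First I would fix $n \ge 3$ and let $G$ be any $2$-regular graph on $n$ vertices; concretely the cycle $C_n$ works, and such a graph exists for every $n \ge 3$. Forming the lexicographic product $G[\overline{K}_2]$ replaces each vertex by two nonadjacent vertices and each edge of $G$ by a copy of $K_{2,2}$. A routine degree and vertex count shows that $G[\overline{K}_2]$ is $4$-regular on $2n$ vertices, and by Theorem~\ref{compo} (with $t=2$ even) it is distance magic.

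Next, applying Theorem~\ref{regk} with $r=4$ and order $2n$ gives
\[
k = \frac{4(2n+1)}{2} = 4n+2 .
\]
Hence for every $n \ge 3$ the integer $4n+2$ is realised as a magic constant, and renaming $n$ as $t$ yields the statement for all $t \ge 3$.

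I do not expect a genuine obstacle here: the argument is essentially bookkeeping layered on top of the cited regular-graph results. The one point that needs care is the range of validity, namely why the claim starts at $t \ge 3$ rather than lower. This is forced precisely because a $2$-regular graph must be a disjoint union of cycles and the smallest cycle $C_3$ has three vertices, so no $2$-regular base graph exists on fewer than three vertices. I would also double-check the degree and order of $G[\overline{K}_2]$ against the magic-constant formula for $G[\overline{K}_t]$ quoted just before the statement, to confirm that the substitution $r=2$, $t=2$ is applied consistently and indeed produces $4n+2$.
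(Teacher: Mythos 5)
Your proof is correct and is essentially the paper's own argument: the paper also takes a $2$-regular graph composed with $\overline{K}_2$, invoking Theorem~\ref{compo} for distance magicness and the magic-constant formula for regular compositions (quoted from the literature, equivalent to applying Theorem~\ref{regk} as you do) to get $4n+2$. Your version is if anything slightly cleaner, since you re-derive the constant directly from Theorem~\ref{regk} rather than relying on the cited formula, which the paper states with a small typo (``$r=2$ and $k=1$'' should read $t=1$ in that formula's convention).
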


In \cite{dm_4reg_petr}, the authors proved that there exists a $4$-regular distance magic graph $G=(V,E)$ of odd order if and only if $|V| \ge 17$. Now, we use this result to prove the existence of even magic constants of the form $4t+4$, $t \ge 8$.
\begin{theorem} \label{even2}
For all $t \ge 8$, $4t+4$ is a magic constant.
\end{theorem}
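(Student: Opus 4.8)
The plan is to realize each constant $4t+4$ as the magic constant of a suitable $4$-regular distance magic graph, invoking the existence result of \cite{dm_4reg_petr} together with the formula of Theorem \ref{regk}. The key observation is that a $4$-regular distance magic graph on $n$ vertices has, by Theorem \ref{regk} with $r = 4$, magic constant $k = \frac{4(n+1)}{2} = 2(n+1)$. Thus to land on $k = 4t+4$ I would solve $2(n+1) = 4t+4$, which gives $n = 2t+1$.

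First I would fix $t \ge 8$ and set $n = 2t+1$. Then $n$ is odd, and since $t \ge 8$ we have $n = 2t+1 \ge 17$. By the cited characterization in \cite{dm_4reg_petr}, a $4$-regular distance magic graph of odd order exists precisely when that order is at least $17$; hence there is such a graph $G$ on our chosen $n$ vertices.

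Finally, applying Theorem \ref{regk} to this $4$-regular graph on $n = 2t+1$ vertices yields $k = 2(n+1) = 2(2t+2) = 4t+4$, so $4t+4$ is a magic constant, completing the argument.

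There is no substantial obstacle here: the entire content is carried by the cited existence theorem, and the proof reduces to checking that the parametrization $n = 2t+1$ simultaneously makes $n$ odd, forces $n \ge 17$ exactly when $t \ge 8$, and produces the target constant through the regular-graph formula. The only point worth confirming is that the threshold $t \ge 8$ is the right one for this construction, i.e. that $n \ge 17$ corresponds to $t \ge 8$ rather than to a weaker bound; the arithmetic settles this, since $n = 17$ gives $t = 8$, and each increase of $n$ by $2$ raises $k$ by $4$, sweeping out every value $4t+4$ with $t \ge 8$.
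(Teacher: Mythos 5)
Your proposal is correct and follows essentially the same route as the paper: both invoke the existence result of \cite{dm_4reg_petr} for $4$-regular distance magic graphs of odd order $n \ge 17$, apply Theorem \ref{regk} with $r=4$ to get $k = 2(n+1)$, and parametrize $n = 2t+1$ to obtain $k = 4t+4$ for all $t \ge 8$. The only cosmetic difference is direction of the argument (you fix $t$ and construct $n$, while the paper starts from the graph and reads off $t$), which does not change the substance.
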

\begin{proof}
 Let $G$ be a $4$-regular distance magic graph of odd order. Then, $|V(G)| \ge 17$. By Theorem \ref{regk}, the magic constant $k$ of such a graph $G$ is given by $k = \frac{4(|V|+1)}{2} = 2|V|+2$. If we write $|V| = 2t+1$, where $t \ge 8$, then $k = 4t+4$. This proves the theorem.
\end{proof}
We already know that the integers $1, 2, 4, 6, 8$ and $12$ are not magic constants \cite{kamatchit}, while $10$  and $24$  are magic constants (see, \cite{kamatchit}, \cite{dm_algo_fuad}). Theorems \ref{even1} and \ref{even2} provide further insights into the nature of the set of magic constants. Theorem \ref{even1} reveals that all positive integers congruent to $2(\bmod\ 4)$, with the exceptions of $2$ and $6$, are the members of the set of magic constants. Meanwhile, Theorem \ref{even2} establishes that all positive integers greater than or equal to $36$ and congruent to $0(\bmod\ 4)$ are also in the set of magic constants. 

As a result, the only remaining even positive integers requiring examination as magic constants are $16, 20, 28$, and $32$. We device an algorithmic approach to determine whether these integers qualify as magic constants of some graphs. Thus we have a complete characterisation of all magic constants (see Theorem \ref{conclusion}). 

\section{An algorithm for constructing magic graphs and computing magic constants}
Given positive integers $n$ and $k$, our algorithm checks if there exists a graph with vertex set $\{1,2, \dots, n\}$ and a magic constant $k$. We denote any such graphs as $G(n,k)$. Without loss of generality, we may assume that each vertex $i$ is labeled as $i$ for $1 \le i \le n$. The algorithm returns the first successful search if such a magic graph exists.
\smallskip
\par Let $S$ be the set of all subsets of the set $\{1,2, \dots, n\}$ with the sum of its elements equal to $k$. For each $1 \le i \le n$, let $NS(i) = \{T \in S: i \not \in T\}$. Note that if there exists a distance magic graph $G(n,k)$, then the neighborhood of a vertex $i$, $N(i)$, must be a member of $NS(i)$. Next, fix a neighborhood of a vertex $n$, i.e., an element of $NS(n)$, and construct a tree rooted at $N(n)$ in the following manner. Add all elements in $NS(n-1)$ as children to $N(n)$. This is level $1$. Add elements of $NS(n-2)$ as children to each node in level $1$. This is level $2$. Continue till we add all elements in $NS(1)$ to each node in level $n-1$. Note that in level $i$, each node of this tree $T(n,k)$ is a possible candidate for a $N(i)$. Each branch of this tree $T(n,k)$ gives an adjacency list for the vertices $1,2, \dots, n$. These lists may or may not correspond to a graph. Whenever adjacency lists corresponding to a branch give a graph, we call such a branch a successful branch.

The algorithm consists of the following four steps:
\subsection{Algorithm}
\begin{enumerate}
    \item[Step 1.] Generate all subsets of the set $\{1,2, \dots, n\}$ that have the sum of its elements equal to $k$. 
    \item[Step 2.] For each $1 \le i \le n$, construct $NS(i)$.
    \item[Step 3.] Construct a rooted tree $T(n,k)$ with a root as a fixed element of $NS(n)$.
    \item[Step 4] For a tree $T(n,k)$ obtained in Step ($3$), find a successful branch. Repeat Steps ($3$) and ($4$) for each element of $NS(n)$ until we find a successful branch, or we run out of elements in $NS(n)$.
\end{enumerate}

The approach described above is a brute force in some sense. For example, for the case $n = 30$, $k = 32$, there are more than $10^{76}$ branches to explore. We merge Step $(3)$ and Step $(4)$ to say Step ($3'$) to optimise the search space as:
\begin{enumerate} 
    \item[Step $3'$] We explore the branches of $T(n,k)$ as discussed in Step $(3)$ and Step $(4)$ with the following condition. Let the tree be constructed till level $i (2 \le i \le n-1)$ and $NS(i) = \{s_1, s_2, \dots, s_{t_i}\}$, $NS(i+1) = \{r_1, r_2, \dots, r_{t_{h}}\}$. Next, while adding children $r_b$ to any of the $s_a$ we will check 
    \begin{equation} \label{eq: check1}
    i+1 \text{ is in any of it's parents } s_a \text{ if and only if } i \in r_b.
    \end{equation}
    Also, at any moment, 
    \begin{equation} \label{eq: check2}
        \text{neighborhood sum of any vertex does not exceed } k.
    \end{equation}
\end{enumerate}

Step ($3'$) ensures symmetry, meaning that $u$ is adjacent to $v$ if and only if $v$ is adjacent to $u$. If the condition stated in \ref{eq: check1} or \ref{eq: check2} is not met at any step, we avoid adding the corresponding node, preventing further growth of the tree below that node. This reduction strategy significantly reduces the size of the tree $T(n,k)$, leading to an exponential reduction in the search space.

\subsection{Proof of correctness}
It is sufficient to prove that every $G(n,k)$ graph corresponds to a (successful) branch of some $T(n,k)$ and each successful branch of each $T(n,k)$ corresponds to some graph $G(n,k)$. It is easy to observe that if $T(n,k)$ has a successful branch, then nodes on that branch constitute a graph $G(n,k)$.

\smallskip
Conversely, suppose there is a graph $G(n,k)$. Consider a tree $T(n,k)$ rooted at $N(n)$. There is a one-to-one correspondence between branches of $T(n,k)$ and the elements of the cartesian product $N(n) \times NS(n-1) \times \dots \times NS(1)$. The neighborhood of each vertex $i$, $N(i)$, is a member of $NS(i)$. Hence, $N(n) \times N(n-1) \times \dots \times N(1)$ is an element of the cartesian product $N(n) \times NS(n-1) \times \dots \times NS(1)$. Thus, it corresponds to some branch of $T(n,k)$.

\subsection{Illustration}
Let us illustrate these steps with an example. Let $n = 7$ and $k = 7$. We know that there is only one graph $P_3 \cup C_4$ up to isomorphism on $7$ vertices admitting magic constant $7$ \cite{dm_algo_fuad}. All subsets of the set $\{1,2, \dots, 7\}$ having sum $7$ are:
\begin{equation} \nonumber
    \bigr[[7],\; [6, 1],\; [5, 2],\; [4, 3],\; [4, 2, 1] \bigr].
\end{equation}
We have listed all the subsets as a list of lists for the proper indexing purpose in the pseudo-code. This completes Step $1$. Next, we generate all possible neighborhood sets for each vertex as suggested in Step $2$.
\begin{align*} \nonumber
&NS(1) = \bigr[[7],\; [5, 2],\; [4, 3] \bigr]\\
&NS(2) = \bigr[[7],\; [6, 1]\; [4, 3] \bigr]\\
&NS(3) = \bigr[[7],\; [6, 1],\; [5, 2],\; [4,2,1] \bigr]\\
&NS(4) = \bigr[[7],\; [6,1],\; [5, 2] \bigr]\\
&NS(5) = \bigr[[7], \; [6,1],\; [4, 3]\; [4, 2, 1] \bigr]\\
&NS(6) = \bigr[[7],\; [5, 2],\; [4, 3],\; [4, 2, 1] \bigr]\\
&NS(7) = \bigr[[6, 1],\; [5, 2],\; [4, 3],\; [4, 2, 1] \bigr]
\end{align*}
Now we construct a tree $T(7,7)$ rooted at the node $[6,1]$ from $NS(7)$. This makes $N(7) = [6, 1]$. Add all the elements of the set $NS(6)$ as children to the root node as shown in Figure \ref{fig: (7,7)tree}. The set of possible candidates for $N(6)$ is
\begin{equation} \nonumber
    NS(6) = \bigr[[7],\; [5, 2],\; [4, 3],\; [4, 2, 1] \bigr]
\end{equation}
We have chosen $NS(7) = [6, 1]$, to maintain symmetry we must have $7 \in N(6)$. The only such element in $NS(6)$ is $[7]$. Hence, we discard all other nodes in the level $1$, and the tree won't grow further below those nodes. Now we go to level $2$. The set of possible candidates for $N(5)$ is
\begin{equation} \nonumber
    NS(5) = \bigr[[7], \; [6,1],\; [4, 3]\; [4, 2, 1] \bigr]
\end{equation}
So far we have $N(7) = [6, 1]$ and $N(6) = [7]$. Since $5 \not \in N(6) \cup N(7)$, $6, 7 \not \in N(5)$ maintaining the condition given in (\ref{eq: check1}) of Step $3'$. Then the only choices for $N(5)$ are $[4, 3]$ and $[4, 2, 1]$. Without loss of generality consider $N(5) = [4,3]$. 
Now we go to level $3$. The set of possible candidates for $N(4)$ is
\begin{equation} \nonumber
    NS(4) = \bigr[[7],\; [6,1],\; [5, 2] \bigr]
\end{equation}
So far we have $N(7) = [6, 1]$, $N(6) = [7]$ and $N(5) = [4, 3]$. Here $4 \not \in N(6) \cup N(7)$ hence $6, 7 \not \in N(4)$ but $4 \in N(5)$ hence $N(4)$ must contain $5$. The only choice for $N(4)$ is $[5,2]$. We continue in a similar way, and lastly, we add pendants $[7]$, $[5,2]$, and $[4,3]$ from the set $NS(1)$. For $N(1)$ we discard the nodes $[5, 2]$ and $[4, 3]$ because $1 \not \in N(j)$ for any $j = 2, 3, 4, 5$ and we must $N(1)$ must have $7$ since $1 \in N(7)$ hence maintaining symmetry condition as given in (\ref{eq: check1}) of Step $3'$.\\

\begin{figure}[ht]
\centering
\begin{tikzpicture}[
    level 1/.style={sibling distance = 2cm, level distance = 2cm},
    level 2/.style={sibling distance = 2cm},
    level 3/.style={sibling distance = 2cm, level distance = 2cm},
    level 4/.style={sibling distance = 2cm},
    level 5/.style={sibling distance = 2cm},
    level 6/.style={sibling distance = 2cm},
    level 7/.style={sibling distance = 2cm}
][scale = 0.5]
\node[ ] (root) {$[6, 1]$}
    child {node[] {$[7]$} 
        child {node[] {$[7]$}}
        child {node[] {$[6, 1]$}}
        child {node[] {$[4, 3]$}
            child {node[] {$[7]$}}
            child {node[] {$[6, 1]$}}
            child {node[] {$[5, 2]$}
                child {node[] {$[7]$}}
                child {node[] {$[6, 1]$}}
                child {node[] {$[5, 2]$}
                    child {node[] {$[7]$}}
                    child {node[] {$[6, 1]$}}
                    child {node[] {$[4, 3]$}
                        child {node[] {$[7]$}}
                        child {node[] {$[5, 2]$}}
                        child {node[] {$[4, 3]$}}
                    }
                }
            child {node[] {$[4, 2, 1]$}}
                }
            }
        child {node[] {$[4, 2, 1]$}
            child {node [right = 2 cm] {$[5, 2]$}
            child {node [right = 1.3cm] {\tiny{\textcolor{red}{NOT POSSIBLE}}}}
            }
        }
        }
    child {node[] {$[5,2]$}}
    child {node[] {$[4, 3]$}}
    child {node[] {$[4, 2, 1]$}};
\draw[dotted] (root-2) -- ++(0,-1) node[midway] {};
\draw[dotted] (root-3) -- ++(0,-1) node[midway] {};
\draw[dotted] (root-4) -- ++(0,-1) node[midway] {};
\draw[dotted] (root-1-1) -- ++(0,-0.7) node[midway] {};
\draw[dotted] (root-1-2) -- ++(0,-0.7) node[midway] {};
\draw[dotted] (root-1-3-1) -- ++(0,-0.7) node[midway] {};
\draw[dotted] (root-1-3-2) -- ++(0,-0.7) node[midway] {};
\draw[dotted] (root-1-3-3-1) -- ++(0,-0.7) node[midway] {};
\draw[dotted] (root-1-3-3-2) -- ++(0,-0.7) node[midway] {};
\draw[dotted] (root-1-3-3-4) -- ++(0,-0.7) node[midway] {};
\draw[dotted] (root-1-3-3-3-1) -- ++(0,-0.7) node[midway] {};
\draw[dotted] (root-1-3-3-3-2) -- ++(0,-0.7) node[midway] {};

\draw[color = blue] (root) -- (root-1);
\draw[color = blue] (root-1) -- (root-1-3);
\draw[color = blue] (root-1-3) -- (root-1-3-3);
\draw[color = blue] (root-1-3-3) -- (root-1-3-3-3);
\draw[color = blue] (root-1-3-3-3) -- (root-1-3-3-3-3);
\draw[color = blue] (root-1-3-3-3-3) -- (root-1-3-3-3-3-1);
\draw[color = red] (root-1) -- (root-1-4);
\draw[color = red] (root-1-4) -- (root-1-4-1);
\draw[color = red] (root-1-4-1) -- (root-1-4-1-1);

\node[ ] at (-10, 0) {$N(7)$};
\node[ ] at (-10, -2) {$N(6)$};
\node[ ] at (-10, -4) {$N(5)$};
\node[ ] at (-10, -6) {$N(4)$};
\node[ ] at (-10, -8) {$N(3)$};
\node[ ] at (-10, -10) {$N(2)$};
\node[ ] at (-10, -12) {$N(1)$};
\end{tikzpicture}
\caption{A tree $T(n, k)$ rooted at $[6, 1]$ with a successful branch.}
\label{fig: (7,7)tree}
\end{figure}
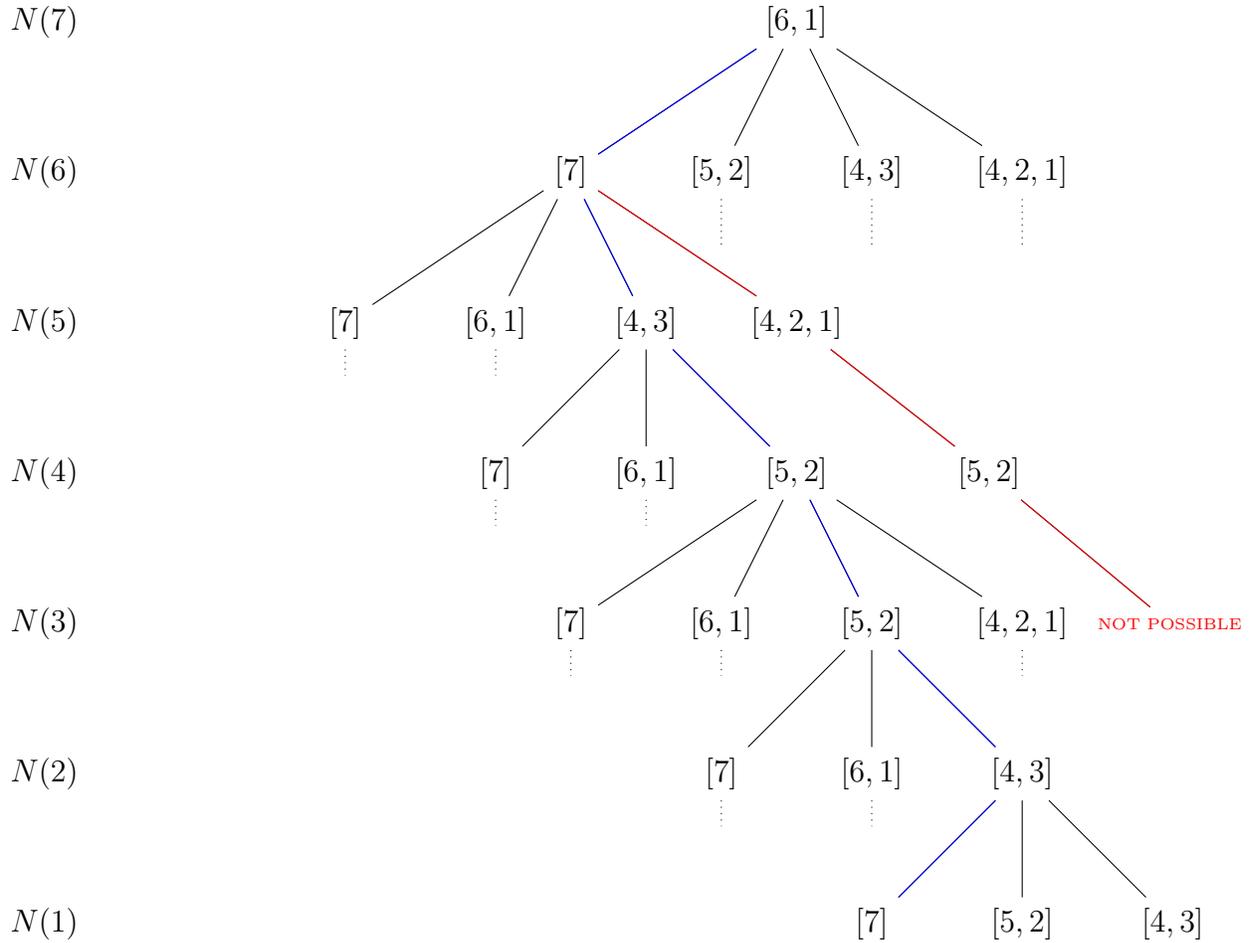

In Figure \ref{fig: (7,7)tree}, we have illustrated a case of a successful branch (colored blue). \\

Now let us consider where we select the node $[4, 2, 1]$ instead of $[4, 3]$ at level $2$. In this case so far we have $N(7)= [6, 1]$, $N(6) = [7]$ and $N(5) = [4, 2, 1]$. Then we go to level $3$. The set of possible candidates for $N(4)$ is:
\begin{equation}\nonumber
    NS(4) = \bigr[[7],\; [6,1],\; [5, 2] \bigr]
\end{equation}
Since $4 \not \in N(6) \cup N(7)$ but $4 \in N(5)$, we must have $6,7 \not \in N(4)$ but $5 \in N(4)$. The only such possibility is $[5, 2]$. We go to the next level $4$. The set of possible candidates for $N(3)$ is:
\begin{equation}\nonumber
    NS(3) = \bigr[[7],\; [6, 1],\; [5, 2],\; [4,2,1] \bigr]
\end{equation}
Since $3 \not \in N(j)$ for any $j = 4, 5, 6, 7$ and each list in $NS(3)$ contains at least one of the numbers $4, 5, 6, 7$, in order to maintain the symmetry $N(3)$ has no choices and we discard this branch. This discarded branch is shown in red color in Figure \ref{fig: (7,7)tree}.
\begin{figure}[ht]
    \centering
    \includegraphics[scale = 0.2]{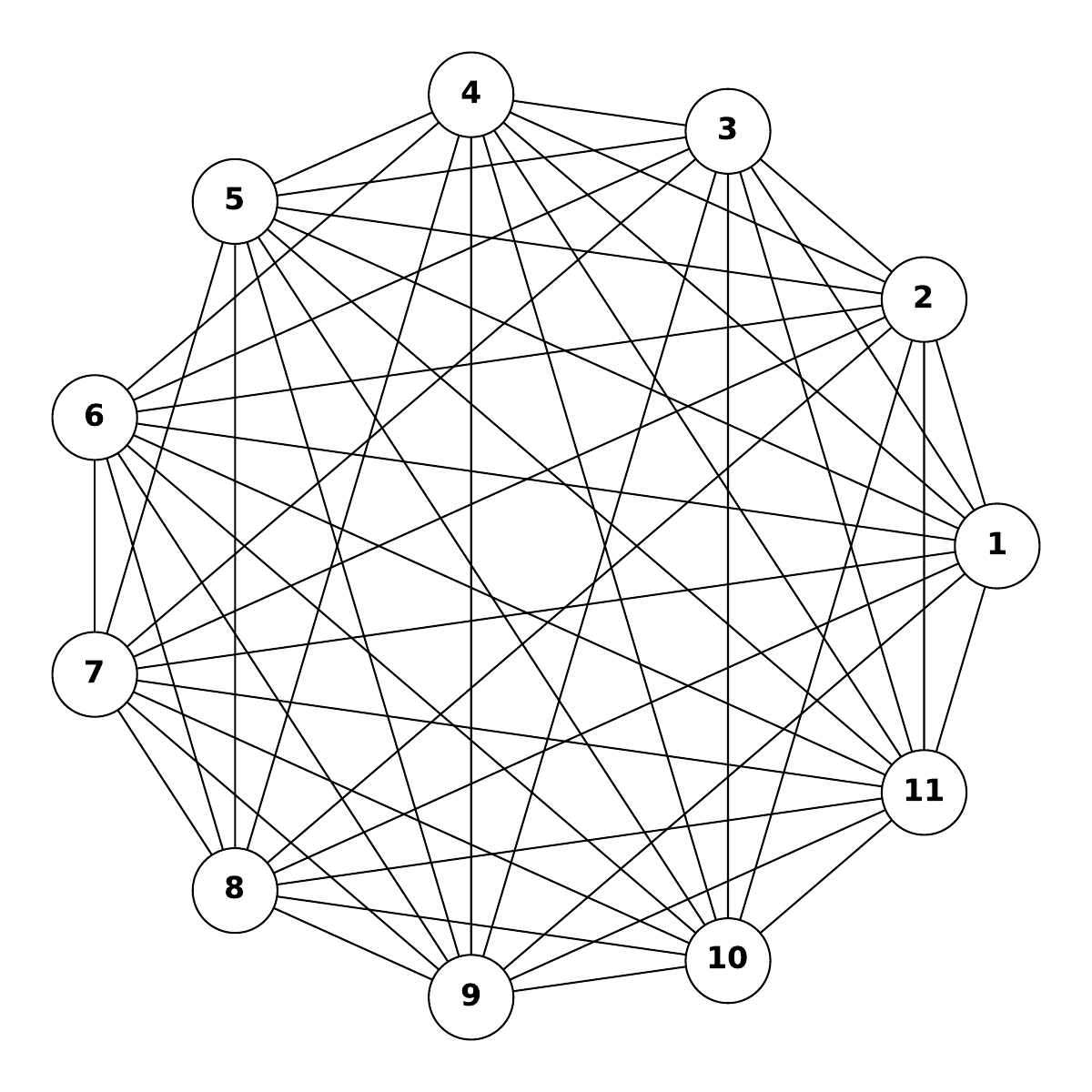}
    \caption{$G(11, 55)$.}
    \label{fig: 11,55}
\end{figure}
\subsection{Graphs output with our algorithm }
For a distance magic graph $G$ on $n$ vertices with distance magic labeling $f$ and magic constant $k$, it is easily seen that $\sum_{v \in V}deg(v)f(v) = nk$ \cite{vilfredt}. For any vertex $v$, $1 = \delta(G) \le deg(v) \le \Delta(G) = n-1$. Hence, we have an upper bound $k \le \frac{n^2-1}{2}$. This bound is sharp (Figure \ref{fig: 11,55} shows an example). Also, by definition of the distance magic graph, $k$ cannot be less than $n$. Hence, we have $n \le k \le \frac{n^2 -1 }{2}$ and both the bounds are sharp. To check whether a given positive integer $k$ is a magic constant, we need to run the algorithm for each pair $(n, k)$, where $1 \le n \le k$. Let $\alpha$ be the smallest positive integer such that $\frac{\alpha(\alpha+1)}{2} \le k$. Hence, running the algorithm for the pairs $(n, k)$, where $\alpha \le n \le k$ is sufficient. For example for $k = 16$, the possibilities for $n$ are $6, 7, \dots, 16$. Further, we can omit a few values in some cases, as mentioned in (Chapter 2, \cite{kamatchit}).\\
\smallskip
Our algorithm explores the possibility of $k = 16$ being the magic constant for the graphs on a possible number of vertices $n$. However, findings show that no graph admits the magic constant $16$. On the other hand, multiple graphs admit $20, 28, 32$ as magic constants. As a representative example, we list one of the such graphs in  Figure \ref{fig: 20,28,32}. It's not practical to list all the non-isomorphic distance magic graphs of order up to $12$ due to their abundance (more than a thousand in number, see Table \ref{tab: alldmgupto12}), we will list them of order up to $10$. Also, the collection of distance magic graphs of order up to $9$, generated using our algorithm is the same as the one given in \cite{dm_algo_fuad} except for graphs of order $8$. Hence, we list the distance magic graphs of order $8$ and $10$ only  (see figures \ref{fig: dmg8}, \ref{fig: dmg10}).

\begin{table}[ht]
		\centering
		\begin{tabular}{|c|c|c|c|c|c|c|c|c|c|c|}
		\hline
        number of vertices: $n$ & $3$ & $4$ & $5$ & $6$ & $7$ & $8$ & $9$ & $10$ & $11$ & $12$\\
        \hline
        number of non-isomorphic distance magic graphs & $1$ & $1$ & $1$ & $1$ & $3$ & $6$ & $5$ & $5$ & $74$ & $1160$\\
		\hline
       \end{tabular}	
    	\caption{Number of non-isomorphic distance magic graphs of order up to $12$}
     \label{tab: alldmgupto12}
\end{table}

\begin{figure}[ht]
  \centering
  \begin{subfigure}{0.3\textwidth}
   \includegraphics[scale = 0.25]{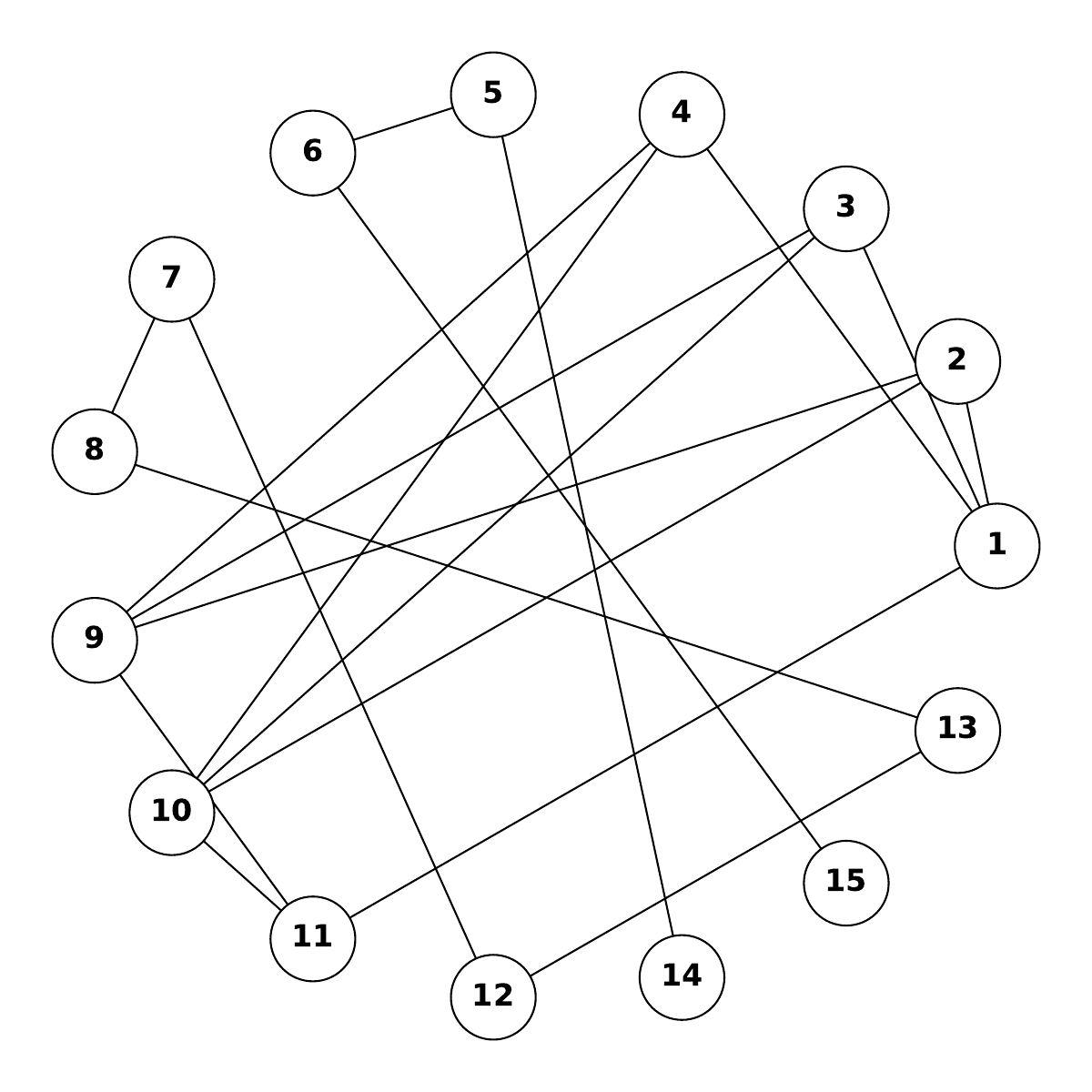}
    \caption{$G(15, 20)$}
  \end{subfigure}
  \hfill
    \begin{subfigure}{0.3\textwidth}
    \centering
    \includegraphics[scale = 0.25]{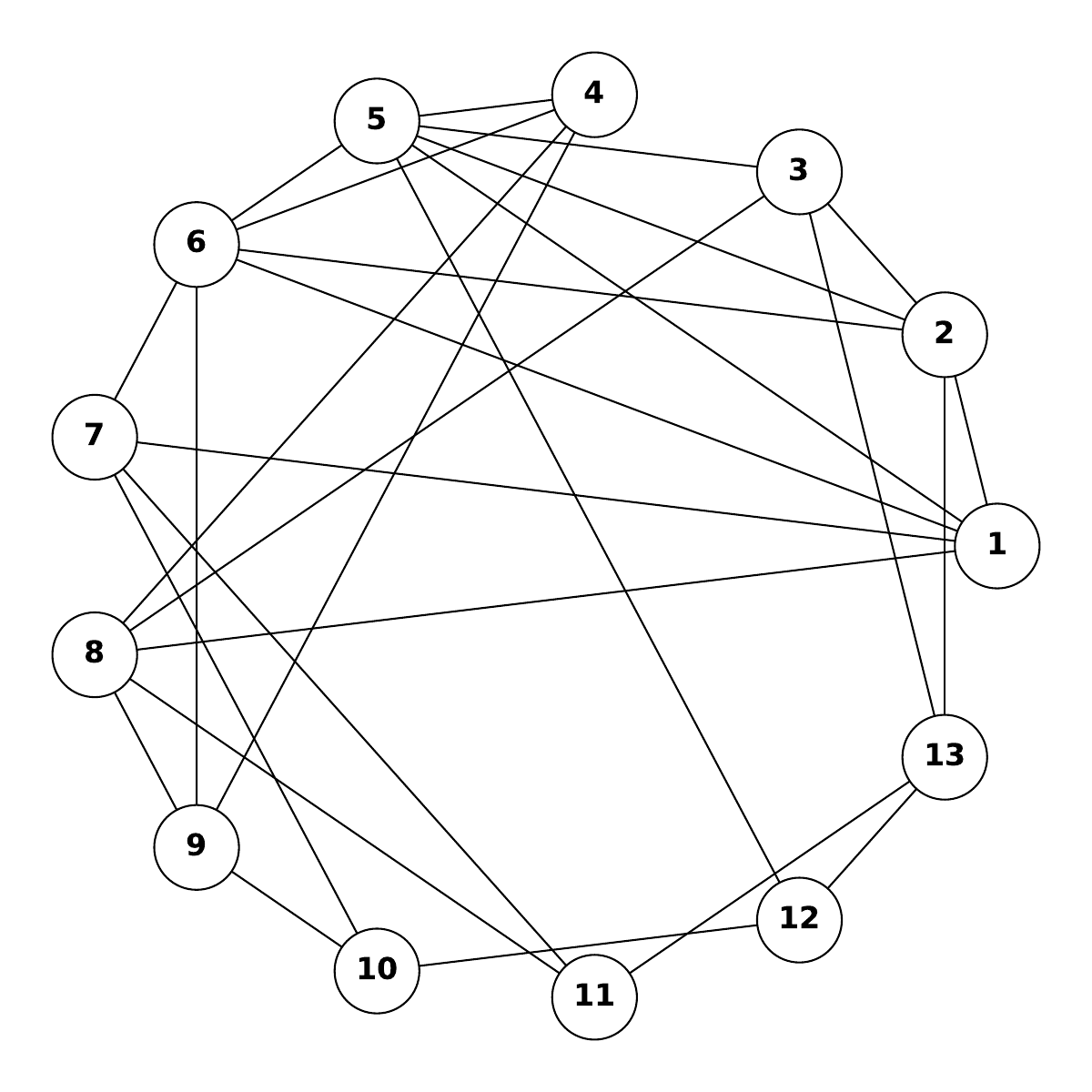}
    \caption{$G(13, 28)$}
  \end{subfigure}
  \hfill
  \begin{subfigure}{0.3\textwidth}
       \includegraphics[scale = 0.25]{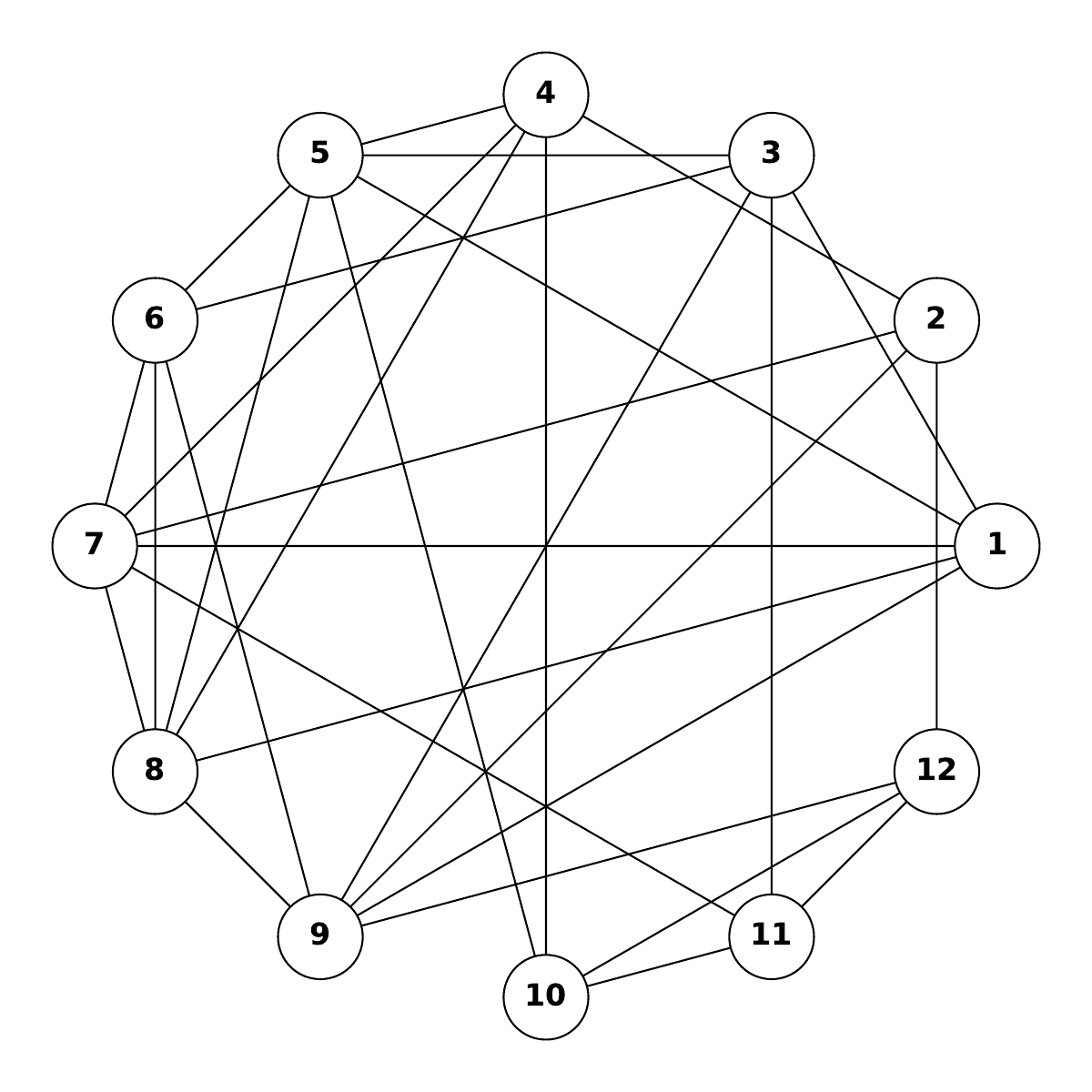}
    \caption{$G(12, 32)$}
  \end{subfigure}
  \caption{Graphs with magic constants $20, 28$ and $32$}
  \label{fig: 20,28,32}
\end{figure}

\clearpage
\begin{figure}[ht]
    \begin{subfigure}{0.4\textwidth}
        \includegraphics[scale = 0.3]{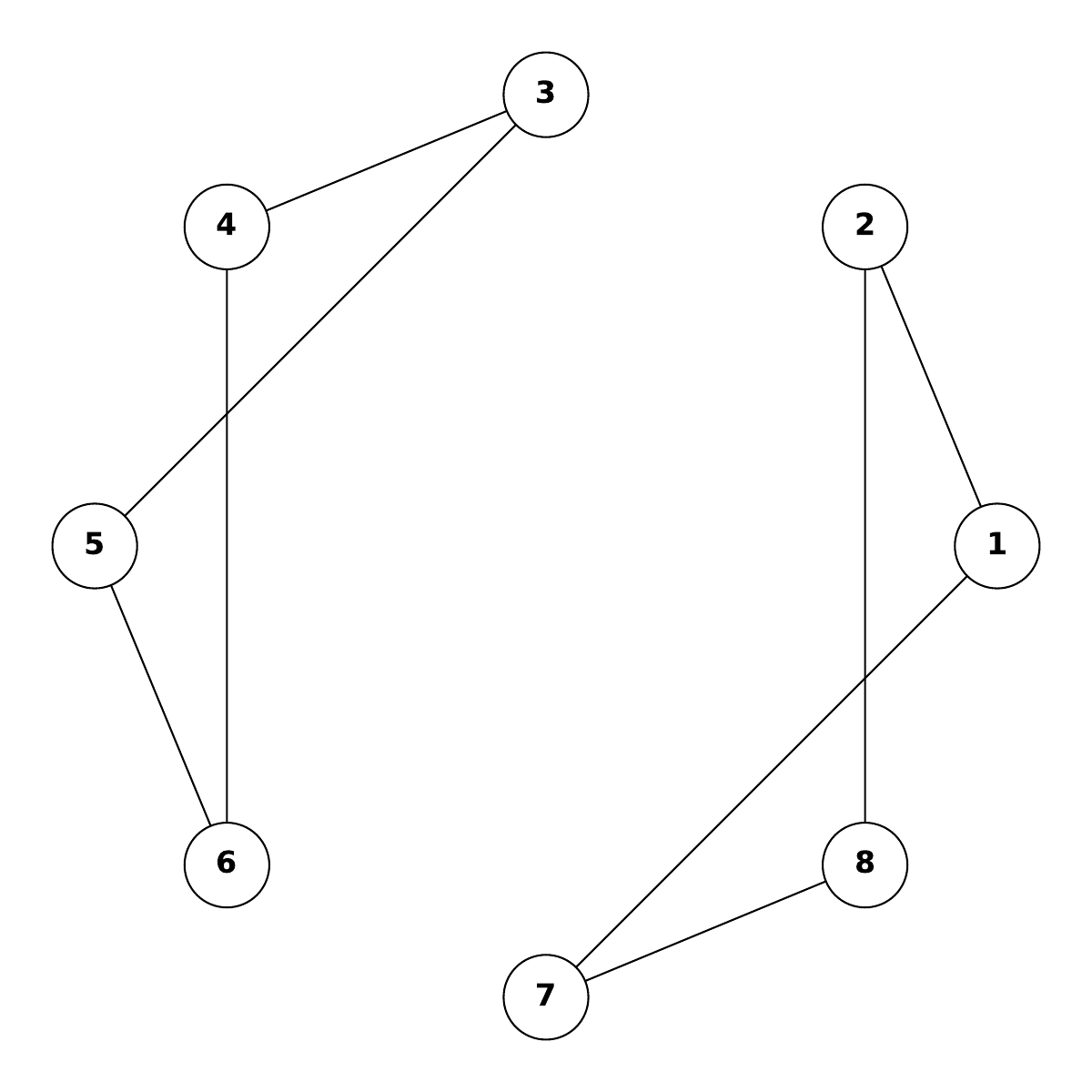}
    \end{subfigure}
    \hfill
    \begin{subfigure}{0.4\textwidth}
        \includegraphics[scale = 0.3]{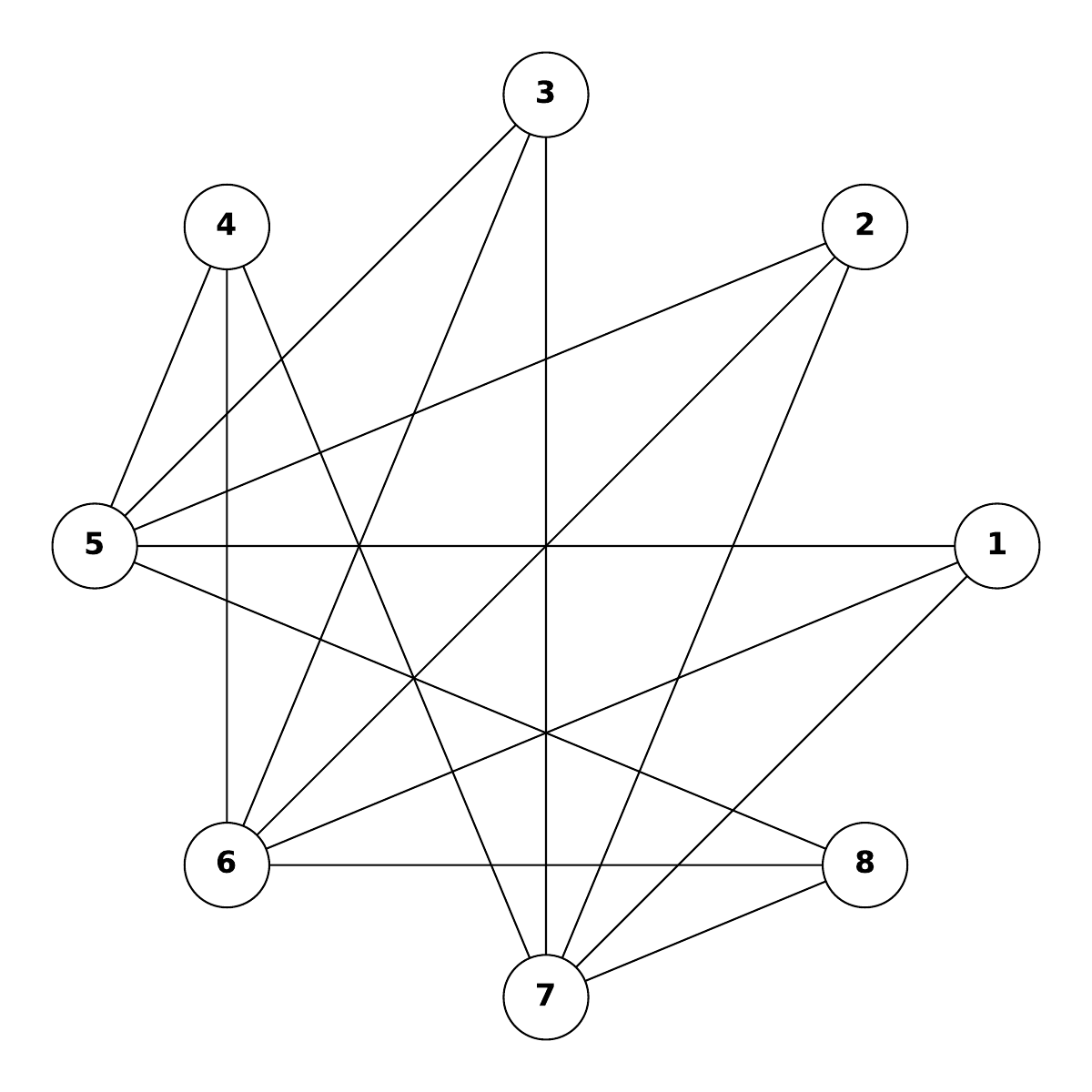}
    \end{subfigure}
    \hfill
    \begin{subfigure}{0.4\textwidth}
        \includegraphics[scale = 0.3]{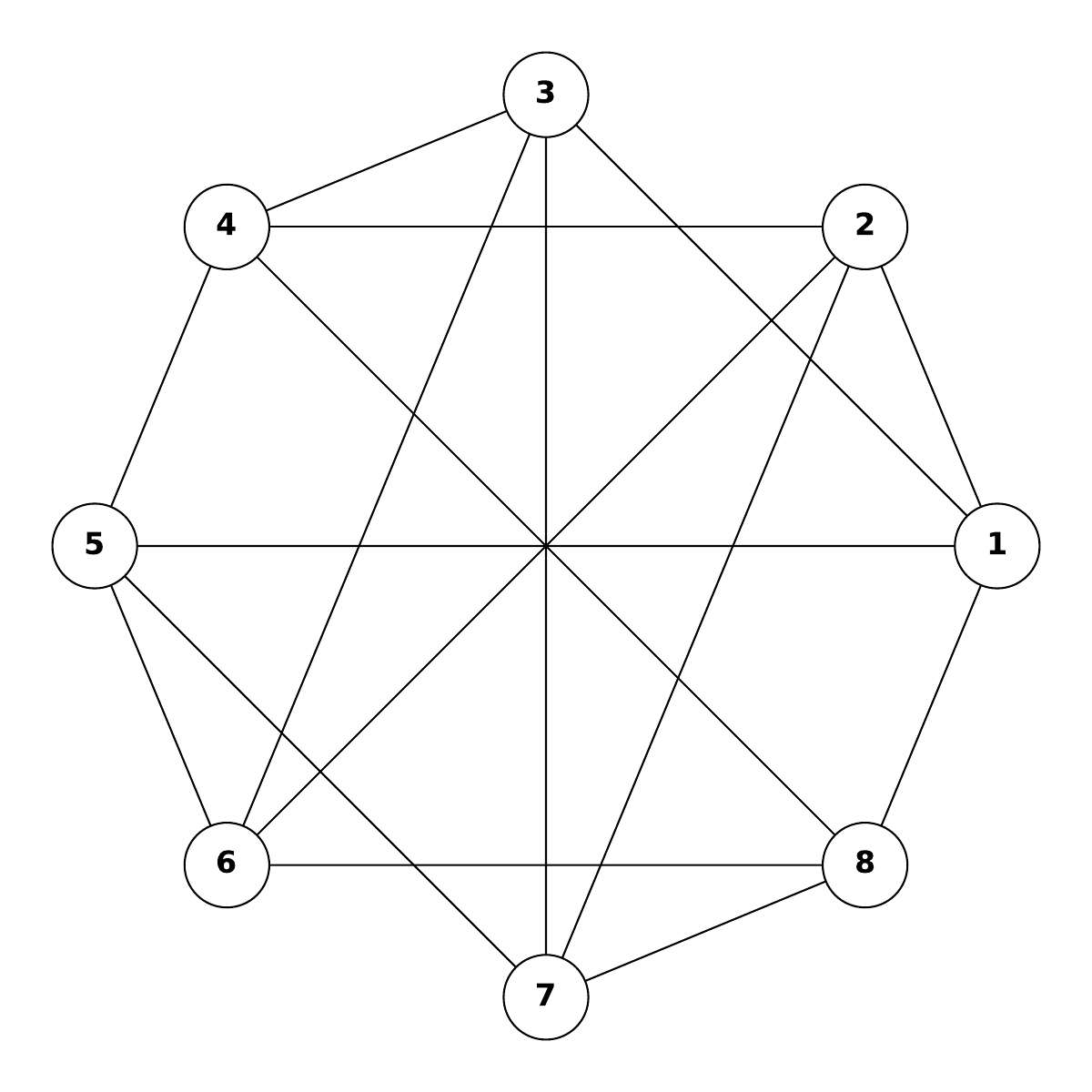}
    \end{subfigure}
    \hfill
    \begin{subfigure}{0.4\textwidth}
        \includegraphics[scale = 0.3]{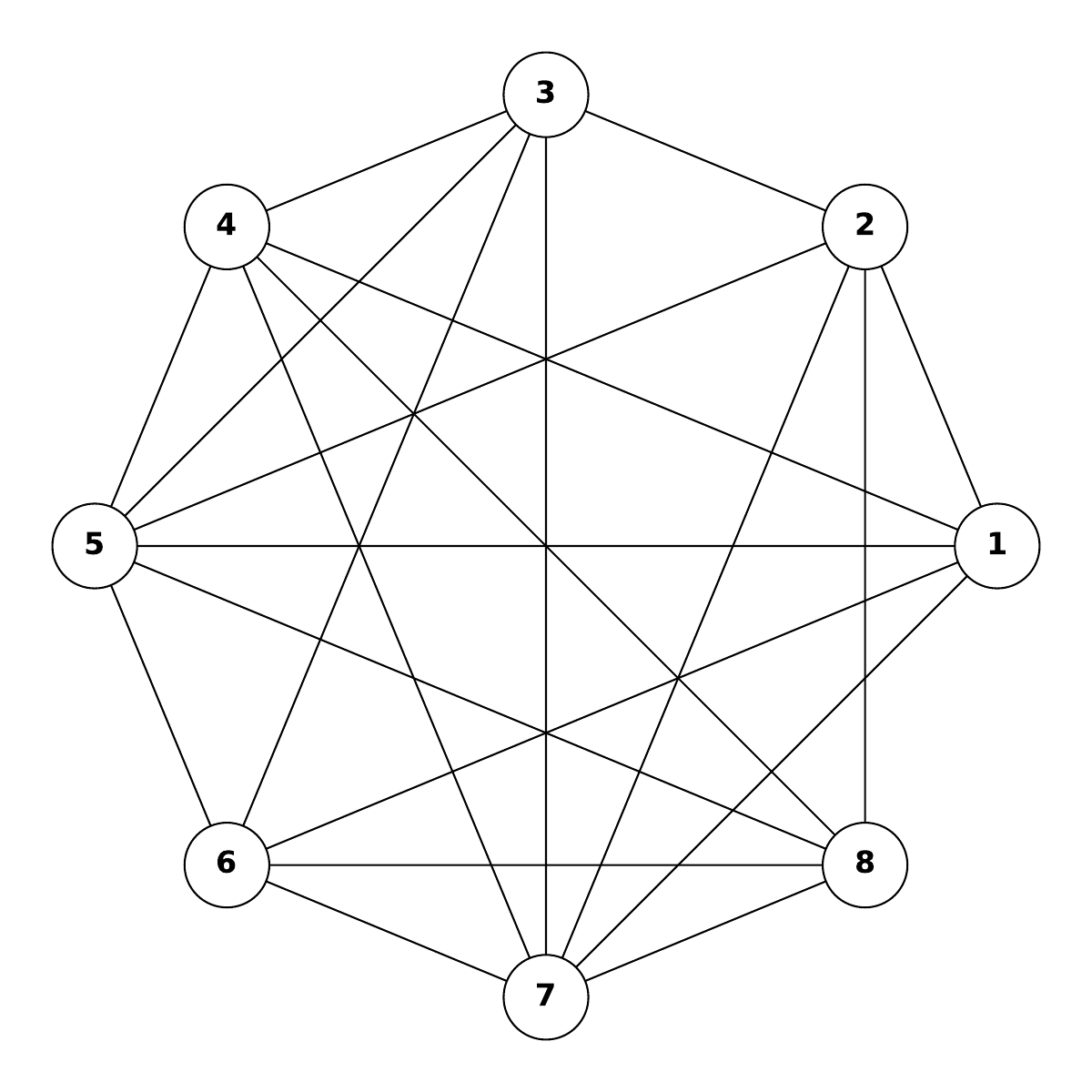}
    \end{subfigure}
    \hfill
    \begin{subfigure}{0.4\textwidth}
        \includegraphics[scale = 0.3]{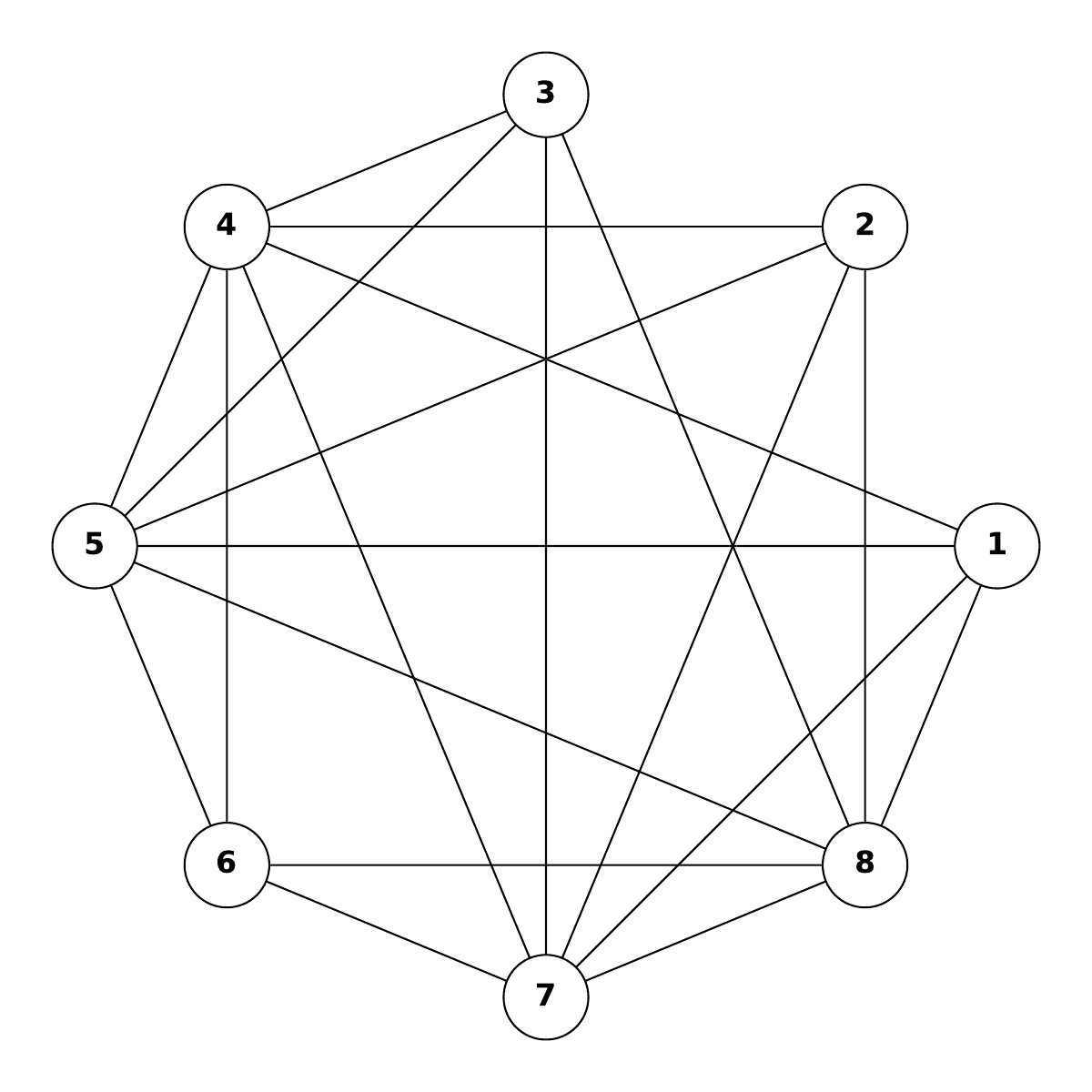}
    \end{subfigure}
    \hfill
    \begin{subfigure}{0.4\textwidth}
        \includegraphics[scale = 0.3]{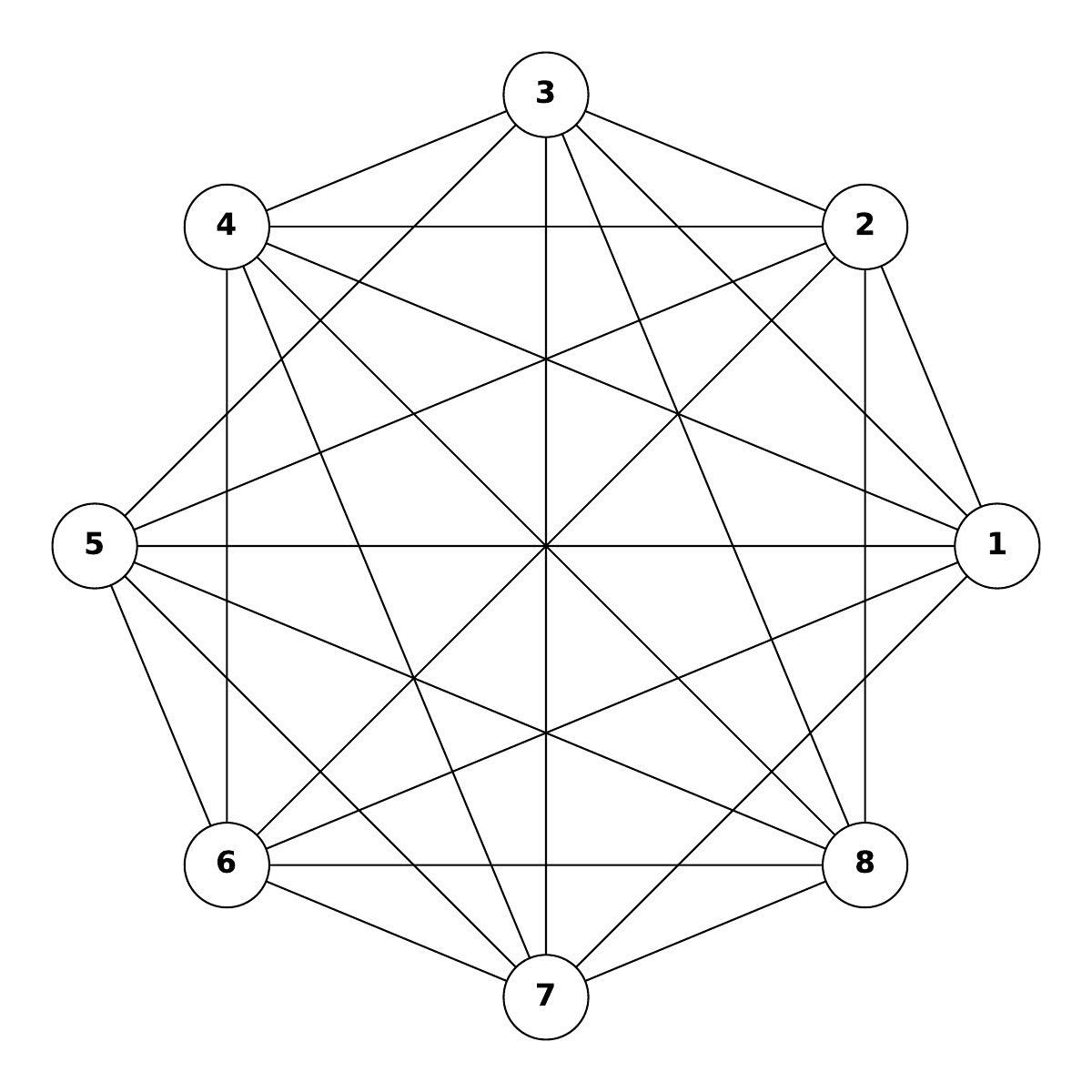}
    \end{subfigure}
\caption{Distance magic graphs of order $8$.}
\label{fig: dmg8}
\end{figure}
\clearpage
\begin{figure}[ht]
        \begin{subfigure}{0.28\textwidth}
        \includegraphics[scale=0.28]{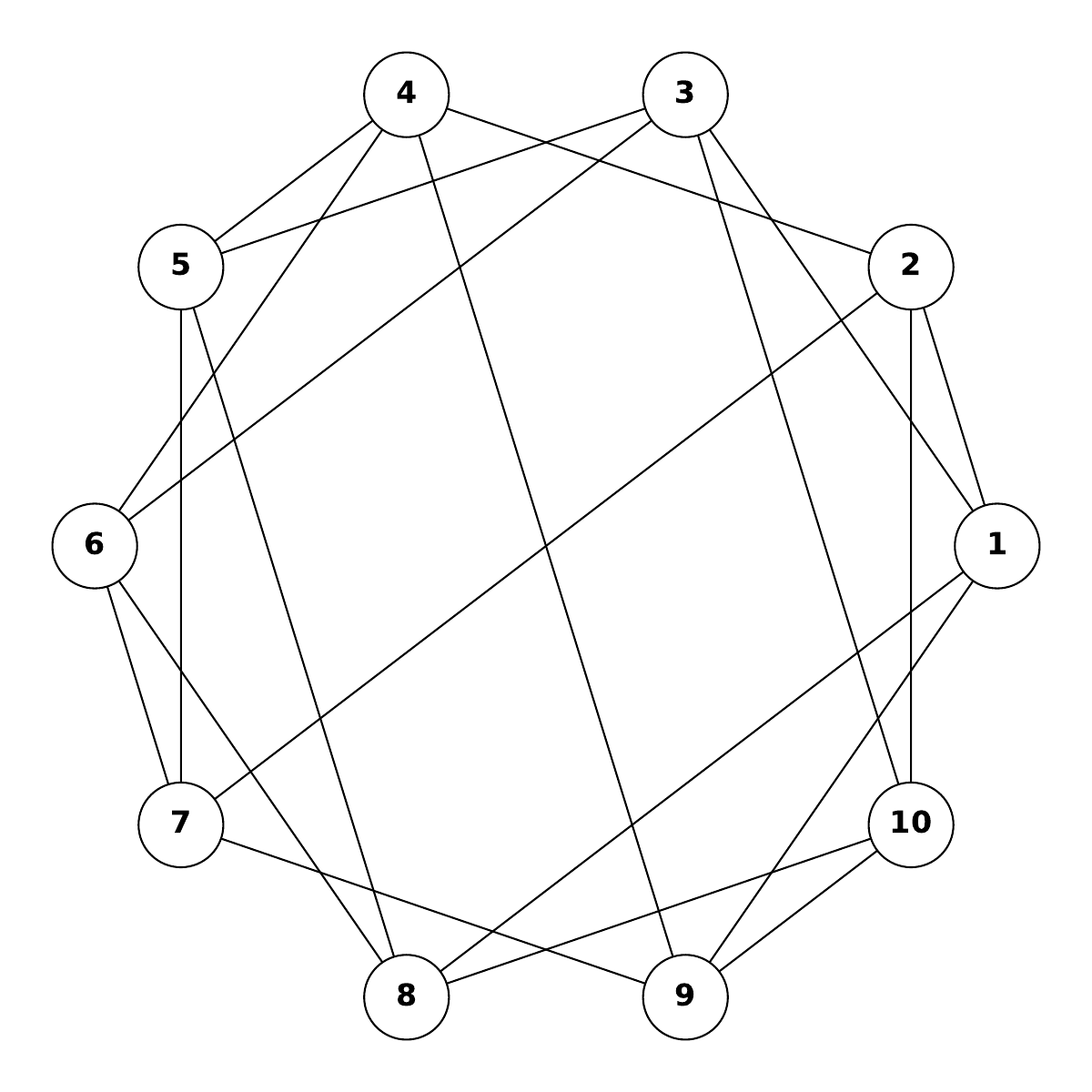}
    \end{subfigure}
    \hfill
        \begin{subfigure}{0.28\textwidth}
        \includegraphics[scale=0.28]{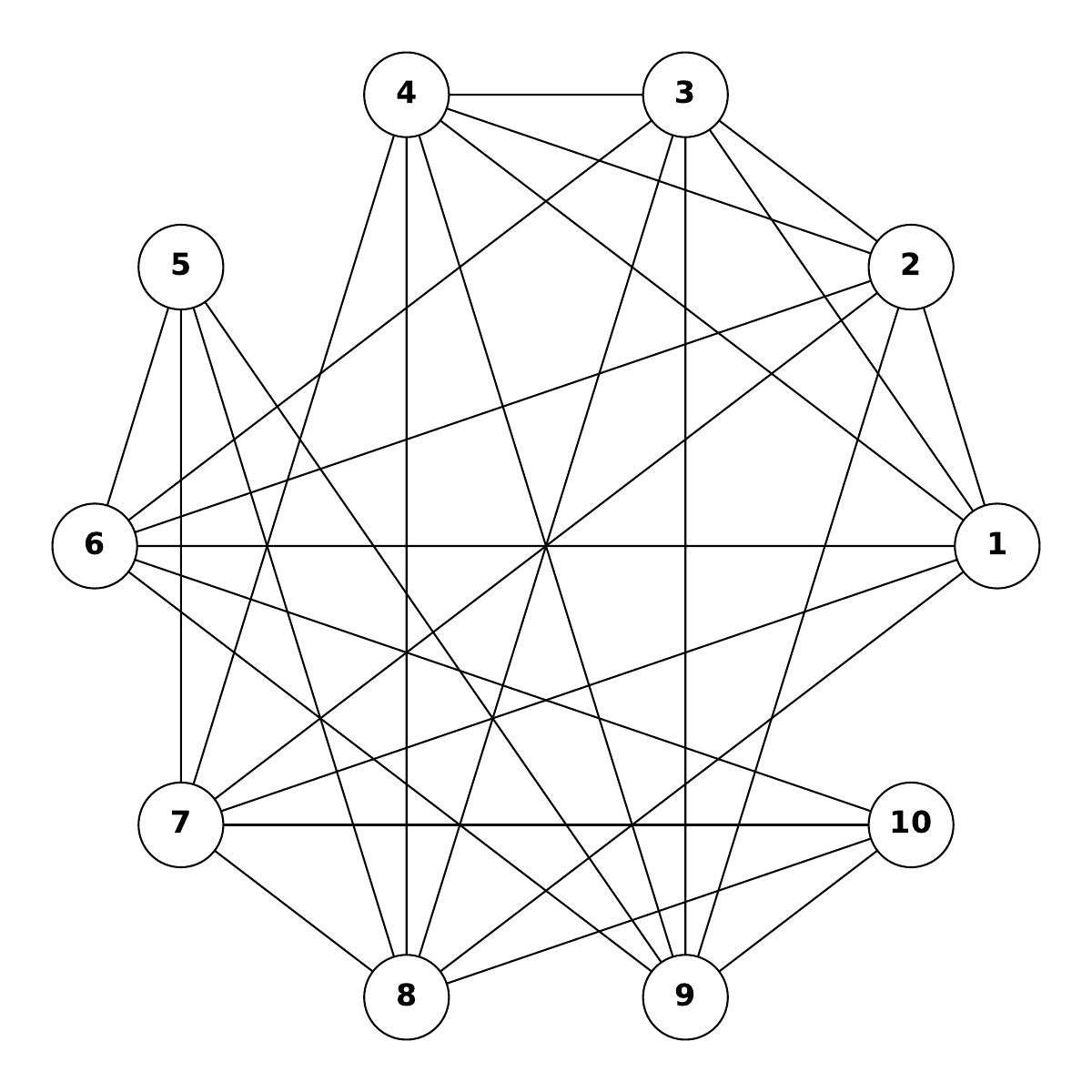}
    \end{subfigure}
    \hfill
        \begin{subfigure}{0.28\textwidth}
        \includegraphics[scale=0.28]{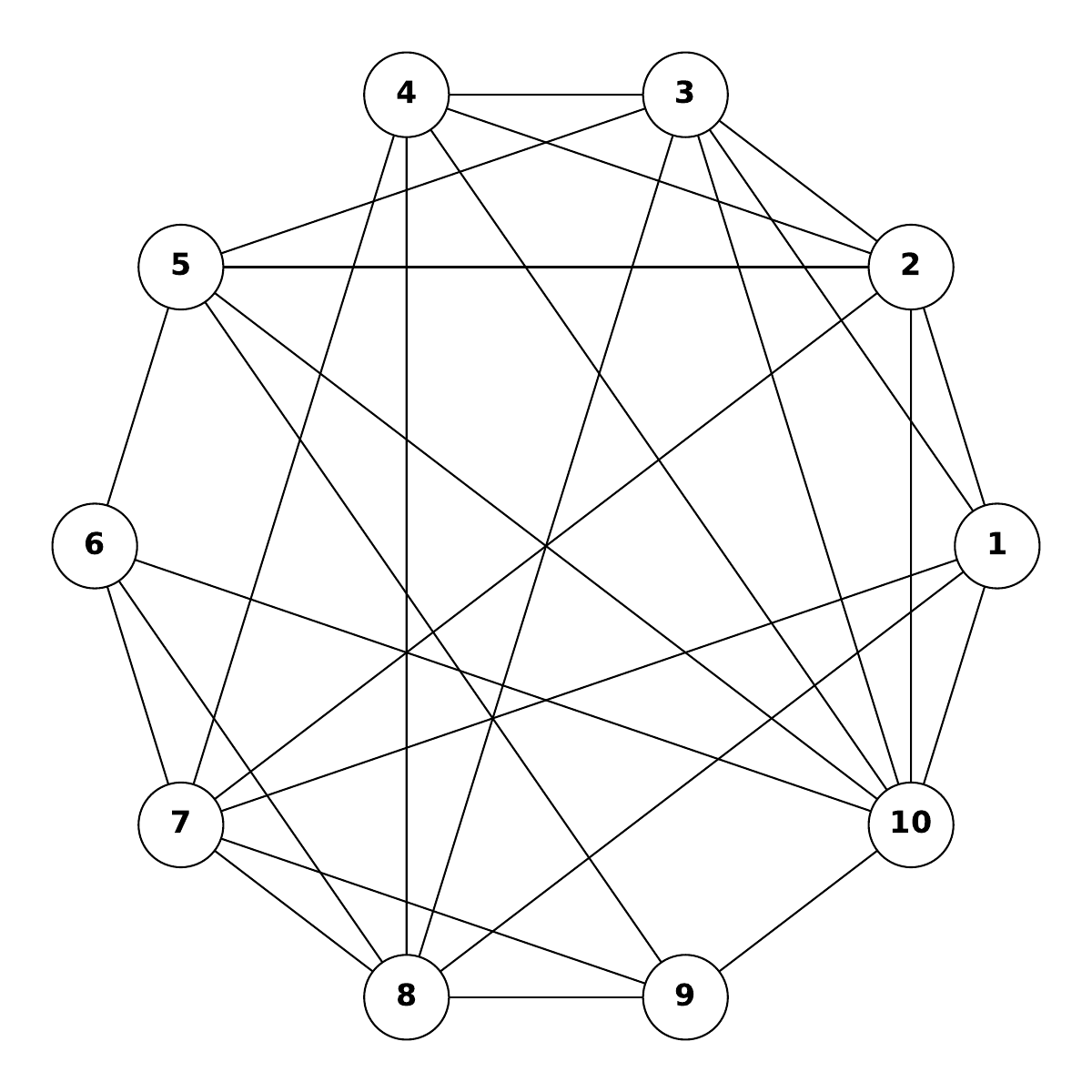}
    \end{subfigure}
    \hfill
        \begin{subfigure}{0.4\textwidth}
        \includegraphics[scale=0.3]{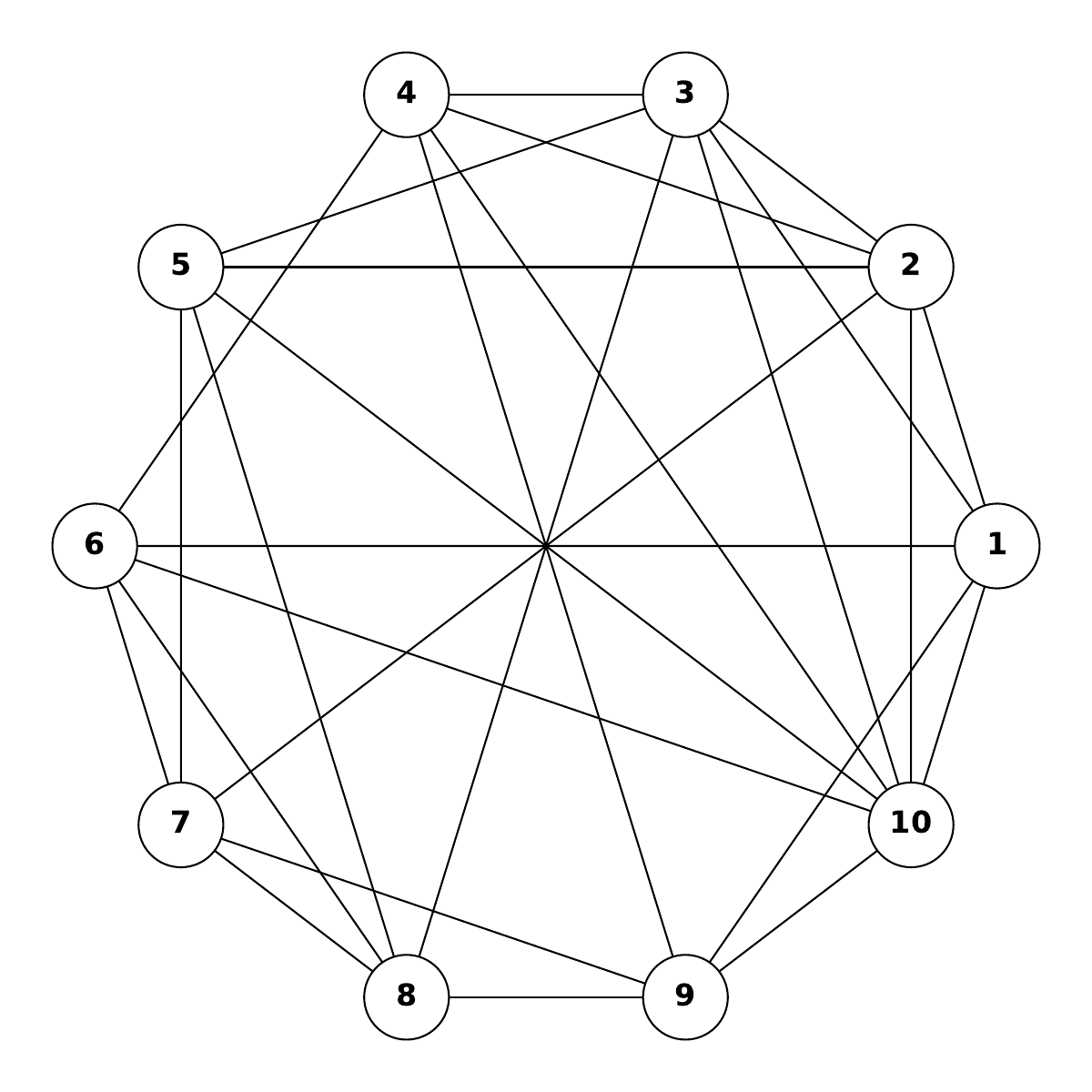}
    \end{subfigure}
    \hfill
        \begin{subfigure}{0.4\textwidth}
        \includegraphics[scale=0.3]{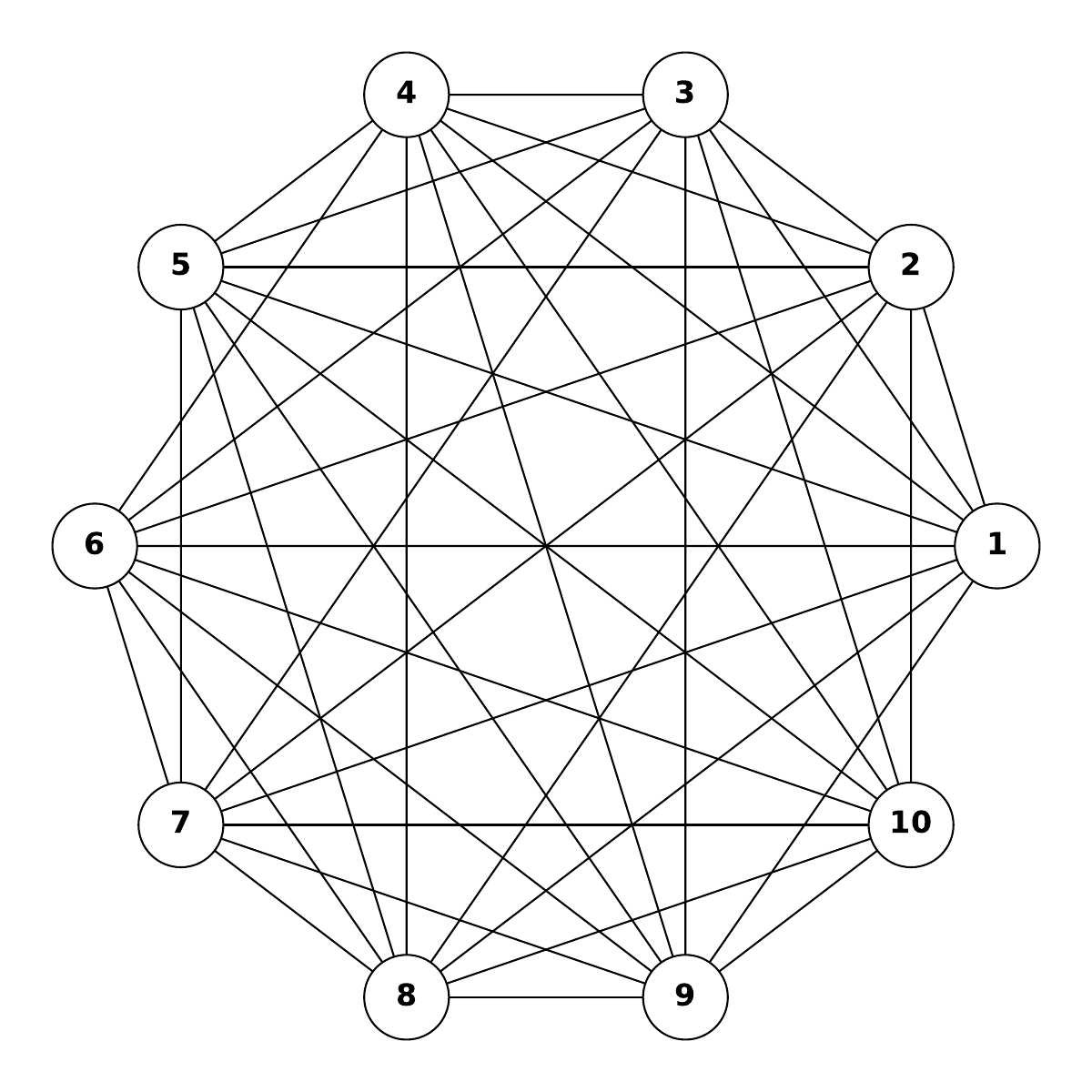}
    \end{subfigure}
\caption{Distance magic graphs of order $10$.}
\label{fig: dmg10}
\end{figure}

\subsection{Pseudo-code}
For a given value of $n$ and $k$, the Step $1$ of the algorithm involves the generation of all subsets of a set $\{1,2, \cdots, n\}$ with sum $k$. The algorithm may try all subsets of the given set in worst case. As a result, the time complexity of the above algorithm is exponential. However, our primary goal is to efficiently generate distance magic graphs only for specific values of $n$ and $k$, so we do not place a strong emphasis on the time complexity of this algorithm.

\smallskip
We provide the complete pseudo-code for the algorithm. 

\begin{algorithm}[]
\caption{Distance magic graph generator}
\begin{algorithmic}[1]
\State\textit{{\textbullet} To generate $k$-sum subsets}
    \Function{getSubset}{$A, k, n, \text{ans}, \text{List}$}
    \State $A \gets \text{list of integers from 1 to } n$
    \State $\text{ans} \gets \text{empty list}$
    \State $\text{List} \gets \text{empty list}$
        \If{$k = 0$}
            \State $ans1 \gets ans$
            \State List.append(ans1)
            \State \Return
        \EndIf

        \If{$k \neq 0$ \textbf{and} $n = \text{length of } A - 1$}
            \State \Return
        \EndIf

        \If{$k - A[n] \geq 0$}
            \State ans.append$(A[n])$
            \State getSubset$(A, k - A[n], n - 1, \text{ans}, \text{List})$ \Comment{include the element}
            \State ans.pop()
        \EndIf

        \State getSubset$(A, k, n - 1, ans, List)$ \Comment{do not include the element}
    \EndFunction
        \smallskip
\State \textit{{\textbullet} To generate neighborhood sets $NS$}
        \Function{generate\_neighbors}{$n, k$}
        \State \text{getSubset}(A, $k$, $n - 1$, \text{ans}, \text{List})
        \State $\text{neighborhoods} \gets \text{empty list}$
        \For{$i \gets 1$ to $n$}
            \State $N_i \gets \text{empty list}$
            \For{\textbf{each} subset \textbf{in} List}
                \If{$i \notin \text{subset}$}
                    \State $N_i \gets \text{append subset to } N_i$
                \EndIf
            \EndFor
            \State $\text{append } N_i \text{ to neighborhoods}$
        \EndFor
        \State \Return neighborhoods
    \EndFunction
\smallskip
        \State $\text{adj} \gets \text{list of empty lists of size } n+1$
        \State $\text{sum\_list} \gets \text{list of } n + 1 \text{ elements initialized to } 0$
        \State $\text{\text{cnt\_depth}} \gets 0$
        \State $\text{successful} \gets \text{list containing a single element } 0$
        \State $N \gets \text{generate\_neighbors}(n, k)$
        \State $\text{N.insert(0, [\;])}$
        \smallskip
        \Function{is\_symmetric}{$\text{adj, n, \text{depth}}$} \Comment{Checks the condition in (\ref{eq: check1}) in Step $3'$}
            \For{$i \gets n$ to $\text{depth} - 1$}
                \For{\textbf{each} neighbor \textbf{in} adj[i]}
                    \If{$\text{neighbor} \leq \text{depth}$}
                        \State \textbf{continue}
                    \EndIf
                    \If{$i \notin \text{adj[neighbor]}$}
                        \State \Return \textbf{False}
                    \EndIf
                \EndFor
            \EndFor
        \EndFunction
\end{algorithmic}
\end{algorithm}

\begin{algorithm}
\ContinuedFloat
\caption*{(Continued)}
\begin{algorithmic}[1]
        \Function{check}{$\text{adj\_list, sum\_list}$} \Comment{Checks the condition given in (\ref{eq: check2}) in Step $3'$} 
            \State $\text{done} \gets 1$
            \For{$i \gets 1 \text{ to length of adj\_list}$}
                \If{$\text{sum\_list}[i] \neq k$}
                    \State $\text{done} \gets 0$
                    \State \textbf{break}
                \EndIf
            \EndFor
        \EndFunction

        \smallskip
        \State\textit{{\textbullet} To construct a tree $T(n,k)$ and explore the successful branch}
        \Function{explorer}{$\text{N, adj, depth, breadth, sum\_list, successful}$} 
            \State $\text{adj}[\text{depth}] \gets \text{N}[\text{depth}][breadth]$ \Comment{Fixes first element of $NS(n)$ as a root of $T(n,k)$}
            \If{\text{depth} = 1}
                \For{$i \gets 1$ to $\text{length of adj}[\text{depth}]$}
                    \State $\text{sum\_list}[\text{adj}[\text{depth}][i]] \gets \text{sum\_list}[\text{adj}[\text{depth}][i]] + \text{depth}$
                \EndFor

                \If{\textbf{check}(adj, sum\_list) \textbf{and} \textbf{is\_symmetric}(adj, n, depth)}
                    \State $\text{successful}[0] \gets \text{successful}[0] + 1$
                    \For{$i \gets 1$ to $\text{length of adj}[\text{depth}]$}
                        \State $\text{sum\_list}[\text{adj}[\text{depth}][i]] \gets \text{sum\_list}[\text{adj}[\text{depth}][i]] - \text{depth}$
                    \EndFor
                    \State \text{Print the adjacency list of the graph}
                    \State \Return \textbf{True}
                \EndIf

                \For{$i \gets 1$ to $\text{length of adj}[\text{depth}]$}
                    \State $\text{sum\_list}[\text{adj}[\text{depth}][i]] \gets \text{sum\_list}[\text{adj}[\text{depth}][i]] - \text{\text{depth}}$
                \EndFor

                \State \Return \textbf{False}
            \EndIf

            \State $\text{flag} \gets \textbf{is\_symmetric}(\text{adj, n, depth})$

            \For{$i \gets 1$ to $\text{length of adj}[depth]$}
                \State $\text{sum\_list}[\text{adj}[\text{depth}][i]] \gets \text{sum\_list}[\text{adj}[\text{depth}][i]] + \text{depth}$
                \If{$\text{sum\_list}[\text{adj}[\text{depth}][i]] > k$}
                    \State $\text{flag} \gets 0$
                \EndIf
            \EndFor

            \If{$\text{flag} = 1$}
                \For{$i \gets 1$ to $\text{length of N}[\text{depth} - 1]$}
                    \If{\textbf{explorer}$(N, \text{adj}, \text{\text{depth}} - 1, i, \text{sum\_list}, \text{successful})$}
                        \State \Return \textbf{True}
                    \EndIf
                \EndFor
            \EndIf

            \For{$i \gets 1$ to $\text{length of adj}[\text{depth}]$}
                \State $\text{sum\_list}[\text{adj}[\text{depth}][i]] \gets \text{sum\_list}[\text{adj}[\text{depth}][i]] - \text{depth}$
            \EndFor

            \State $\text{adj}[\text{depth}] \gets \text{empty list}$
            \State \Return \textbf{False}
        \EndFunction

        \For{$i \gets 1$ to $\text{length of N}[n]$} \Comment{to construct $T(n,k)$ rooted at each element in $NS(n)$}
            \State \textbf{explorer}$(N, \text{adj}, n, i, \text{sum\_list}, \text{successful})$
        \EndFor

\end{algorithmic}
\end{algorithm}

\clearpage

\section{Conclusions}

With the results known earlier, and those proved in this paper, the magic constants are now completely characterized, as shown in the following theorem. 

\begin{theorem} \label{conclusion}
All positive integers except $1,2,4,6,8,12$, and $16$ are magic constants.
\end{theorem}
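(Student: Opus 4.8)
The plan is to prove the characterization by partitioning the positive integers according to their residue modulo $4$ and assembling the statement from the constructions already in hand, isolating the finitely many "sporadic" values that no general family reaches. Since every distance magic graph satisfies $k \geq n$, and the only values that resist the uniform constructions turn out to be small, the whole argument reduces to (i) covering each residue class by an infinite family plus finitely many cited special cases, and (ii) settling a short explicit list by the algorithm of Section 3.

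First I would dispose of the odd integers. By Theorem \ref{odd1} every integer of the form $4t+1$ with $t \geq 1$ is the magic constant of $tC_4$, and by Theorem \ref{odd2} every integer of the form $4t+3$ with $t \geq 0$ is realized (taking $P_3$ together with $t$ copies of $C_4$, so that $k=n=4t+3$, with $t=0$ giving $P_3$ itself and $k=3$). Together these exhaust every odd integer $\geq 3$. The remaining odd value, $1$, is a known non-magic constant, so the only odd exception is $1$.

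Next I would treat the even integers by the two residue classes. For $k \equiv 2 \pmod 4$, Theorem \ref{even1} supplies every $4t+2$ with $t \geq 3$, i.e. every such integer $\geq 14$; the smaller members $2$ and $6$ are known non-magic, while $10$ is a known magic constant, which pins down the entire class. For $k \equiv 0 \pmod 4$, Theorem \ref{even2} supplies every $4t+4$ with $t \geq 8$, i.e. every multiple of $4$ that is $\geq 36$. This leaves only the finite list $4, 8, 12, 16, 20, 24, 28, 32$, of which $4, 8, 12$ are known non-magic and $24$ is a known magic constant. The four values $16, 20, 28, 32$ are exactly the cases that no regular or composition construction above reaches, and here I expect the genuine difficulty to lie: I would invoke the algorithm of Section 3, running it over all admissible orders $n$ within the bounds $n \leq k \leq \frac{n^2-1}{2}$ derived earlier. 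The search exhibits the explicit witnesses $G(15,20)$, $G(13,28)$, and $G(12,32)$ (see Figure \ref{fig: 20,28,32}), certifying that $20, 28, 32$ are magic, and returns \emph{no} graph for $k=16$ over every admissible order, certifying that $16$ is not a magic constant.

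The main obstacle is precisely this last, negative, assertion for $16$. The positive claims for $20, 28, 32$ are each discharged by a single certificate, but showing $16$ is not a magic constant demands ruling out every graph on every admissible number of vertices, which is feasible only because the tree-pruning of Step $3'$ — the symmetry condition (\ref{eq: check1}) together with the running partial-sum bound (\ref{eq: check2}) — collapses the otherwise astronomical branch count of $T(n,16)$. Having established that $20, 28, 32$ are magic and that $16$ is not, I would collect the complete list of non-magic values $1, 2, 4, 6, 8, 12, 16$ and observe that every other positive integer falls into one of the covered families, yielding the stated characterization.
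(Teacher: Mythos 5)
Your proposal is correct and mirrors the paper's own argument exactly: odd values via Theorems \ref{odd1} and \ref{odd2}, the class $2 \pmod 4$ via Theorem \ref{even1} plus the known status of $2, 6, 10$, the class $0 \pmod 4$ via Theorem \ref{even2} plus the known status of $4, 8, 12, 24$, and the four residual values $16, 20, 28, 32$ settled by the Section 3 algorithm (witnesses for $20, 28, 32$; exhaustive failure for $16$). The paper assembles Theorem \ref{conclusion} from precisely these ingredients in the same decomposition, so there is nothing further to compare.
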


Not all magic constants arise from connected graphs. Also, for some integers, for example, $n = k = 7$, there is a unique distance magic graph $G(n,k)$. This leads naturally to the following questions. 
\begin{enumerate}
    \item Which magic constants are realised by connected distance magic graphs?
    \item For which pairs $(n, k)$, there is unique distance magic graph on $n$ vertices with magic constant $k$?
\end{enumerate}
\textbf{Acknowledgement:} The authors are grateful to Atharv Karandikar and Satyaprasad for their invaluable assistance in implementing the algorithm.

\bibliographystyle{plain}
\bibliography{refs}

\begin{thebibliography}{10}

\bibitem{dm_survey_arumugam}
S.~Arumugam, D.~Froncek, and N.~Kamatchi.
\newblock Distance magic graphs---a survey.
\newblock {\em J. Indones. Math. Soc.}, (Special edition):11--26, 2011.

\bibitem{uniquek}
S.~Arumugam, N.~Kamatchi, and G.~R. Vijayakumar.
\newblock On the uniqueness of {$D$}-vertex magic constant.
\newblock {\em Discuss. Math. Graph Theory}, 34(2):279--286, 2014.

\bibitem{algo_bertault}
Bertault Fran\c{c}ois, F.~Raquel, M.~Miller, P.~Helmut, and V.~Ehsan.
\newblock A heuristic for magic and antimagic graph labellings.
\newblock In {\em Proc. VII Spanish Congress on Metaheuristics and Evolutive
  and Bioinspired Algorithms 2010}, pages 677--684, 2010.

\bibitem{dm_survey_gallian}
J.~A. Gallian.
\newblock A dynamic survey of graph labeling (25th edition).
\newblock {\em Electron. J. Combin.}, 5:Dynamic Survey 6, 43, 2022.

\bibitem{dm_aloysius}
A.~Godinho and T.~Singh.
\newblock Some distance magic graphs.
\newblock {\em AKCE Int. J. Graphs Comb.}, 15(1):1--6, 2018.

\bibitem{sigma_jinnah}
M.~Jinnah.
\newblock On ${\Sigma}$-labelled graphs.
\newblock In B.D. Acharya and S.M. Hedge, editors, {\em Technical Proceedings
  of Group Discussion on Graph Labeling Problems}, pages 71--77. NITK
  Surathkal, 1999.

\bibitem{kamatchit}
N.~Kamatchi.
\newblock {\em Distance Magic and Distance Antimagic Labelings of Graphs}.
\newblock Phd thesis, Kalasalingam Academy of Research and Education,
  Krishnankoil, Srivilliputhur, Tamil Nadu, June 2012.

\bibitem{dm_4reg_petr}
P.~Kov\'{a}\v{r}, D.~Fron\v{c}ek, and T.~Kov\'{a}\v{r}ov\'{a}.
\newblock A note on 4-regular distance magic graphs.
\newblock {\em Australas. J. Combin.}, 54:127--132, 2012.

\bibitem{dm_miller}
M.~Miller, C.~Rodger, and R.~Simanjuntak.
\newblock Distance magic labelings of graphs.
\newblock {\em Australas. J. Combin.}, 28:305--315, 2003.

\bibitem{o_uniquek}
A.~O'Neal and P.~J. Slater.
\newblock Uniqueness of vertex magic constants.
\newblock {\em SIAM J. Discrete Math.}, 27(2):708--716, 2013.

\bibitem{rtv}
S.B. Rao, T.~Singh, and V.~Parmeswaran.
\newblock Some sigma labelled graphs i.
\newblock In S.~Arumugam, B.D. Acharya, and S.B. Rao, editors, {\em Graphs,
  Combinatorics, Algorithms and Applications}, pages 135--140. Narosa
  Publishing House, New Delhi, 2008.

\bibitem{vilfredt}
V.~Vilfred.
\newblock {\em $\Sigma$-Labelled Graphs and Circulant Graphs}.
\newblock Phd thesis, University of Kerala, Trivandrum, Kerala, India, 1994.

\bibitem{west}
D.~B. West.
\newblock {\em Introduction to Graph Theory}.
\newblock Prentice Hall, 2nd edition, September 2000.

\bibitem{dm_algo_fuad}
F.~Yasin and R.~Simanjuntak.
\newblock A heuristic for distance magic labeling.
\newblock {\em Procedia Computer Science}, 74:100--104, 2015.
\newblock “The 2nd International Conference of Graph Theory and Information
  Security”.

\end{thebibliography}

\end{document}